\newcommand*{\wh}{\widehat}
\newcommand*{\wt}{\widetilde}
\newcommand*{\ol}{\overline}
\newcommand*{\eps}{\varepsilon}
\newcommand*{\N}{\mathbb{N}}
\newcommand*{\R}{\mathbb{R}}
\newcommand*{\bbN}{\mathbb N}
\newcommand*{\bbQ}{\mathbb Q}
\newcommand*{\bbR}{\mathbb R}
\newcommand*{\cF}{\mathcal{F}}
\newcommand*{\loc}{\mathrm{loc}}
\newcommand*{\Law}{\operatorname{Law}}
\newcommand{\be}{\begin{eqnarray*}}
\newcommand{\ee}{\end{eqnarray*}}
\newcommand{\ben}{\begin{eqnarray}}
\newcommand{\een}{\end{eqnarray}}
\newcommand{\bi}{\begin{itemize}}
\newcommand{\ei}{\end{itemize}}
\newtheorem{theo}{Theorem}[section]
\newtheorem{lemma}[theo]{Lemma}
\newtheorem{propo}[theo]{Proposition}
\newtheorem{corollary}[theo]{Corollary}
\theoremstyle{definition}
\newtheorem{ex}[theo]{Example}
\newtheorem{remark}[theo]{Remark}
\newcounter{numpar}[section]
\title{Approximating exit times of continuous Markov processes}
\author{%Stefan Ankirchner\thanks{%
%Stefan Ankirchner, Institute of Mathematics, University of Jena, Ernst-Abbe-Platz 2, 07745 Jena, Germany. \emph{Email:} s.ankirchner@uni-jena.de, \emph{Phone:} +49 (0)3641 946275.
%}
Thomas Kruse\thanks{%
Thomas Kruse, Institute of Mathematics, University of Gie{\ss}en, Arndtstr.~2, 35392 Gießen, Germany.
\emph{Email:} thomas.kruse@math.uni-giessen.de, \emph{Phone:} +49 (0)641 9932043.}
\and Mikhail Urusov\thanks{%
Mikhail Urusov, Faculty of Mathematics, University of Duisburg-Essen, Thea-Leymann-Str.~9, 45127 Essen, Germany.
\emph{Email:} mikhail.urusov@uni-due.de, \emph{Phone:} +49 (0)201 1837428.
}}
\begin{document}

\maketitle

\begin{abstract}
The time at which a one-dimensional continuous strong Markov process attains a boundary point of its state space
is a discontinuous path functional and it is, therefore,
unclear whether the exit time can be approximated
by hitting times of approximations of the process.
We prove a functional limit theorem
for approximating weakly
both the paths of the Markov process
and its exit times.
In contrast to the functional limit theorem
in~\cite{aku2018cointossing}
%[arXiv:1902.06249v1]
%[S.~Ankirchner, T.~Kruse \& M.~Urusov. A functional limit theorem 
%for coin tossing Markov chains.\ {\it Preprint, arXiv:1902.06249v1} (2019)]
for approximating the paths,
we impose a stronger assumption here.
This is essential, as we present an example showing
that the theorem extended with the convergence
of the exit times does not hold
under the assumption in~\cite{aku2018cointossing}.
%[arXiv:1902.06249v1].
%[S.~Ankirchner, T.~Kruse \& M.~Urusov. A functional limit theorem 
%for coin tossing Markov chains.\ {\it Preprint, arXiv:1902.06249v1} (2019)].
However, the \mbox{EMCEL} scheme introduced
in~\cite{aku2018cointossing}
%[arXiv:1902.06249v1]
%[S.~Ankirchner, T.~Kruse \& M.~Urusov. A functional limit theorem 
%for coin tossing Markov chains.\ {\it Preprint, arXiv:1902.06249v1} (2019)]
satisfies the assumption of our theorem,
and hence we have a scheme capable of
approximating both the process and its exit times
for \emph{every} one-dimensional continuous strong Markov process,
even with irregular behavior (e.g., a solution of an SDE
with irregular coefficients or a Markov process with sticky features).
Moreover, our main result can be used to check
for some other schemes whether the exit times converge.
As an application we verify that the weak Euler scheme
is capable of approximating the absorption time of the CEV diffusion
and that the scale-transformed weak Euler scheme
for a squared Bessel process is capable of approximating
the time when the squared Bessel process hits zero.

\smallskip
\emph{Keywords:}
one-dimensional Markov process;
speed measure;
absorption time;
exit time;
Markov chain approximation;
numerical scheme;
functional limit theorem.

\smallskip
\emph{2010 MSC:}
60J22; 60J25; 60J60; 60H35; 60F17.
\end{abstract}

\section*{Introduction}
In this article we aim at approximating exit times of one-dimensional
regular continuous strong Markov processes
(in the sense of Section~VII.3 in~\cite{RY}
or Section~V.7 in~\cite{RogersWilliams}).
In what follows, the latter class of processes is called
\emph{general diffusions},
and the term \emph{exit time} stays for a hitting time of a boundary point of the state space.

In the introduction we consider for simplicity
a general diffusion $Y=(Y_t)_{t\in[0,\infty)}$
in natural scale with the state space $I=[0,\infty)$
and a speed measure $m$.
For the interior of the state space we use the notation
$I^\circ=(0,\infty)$.
A particular case is that $Y$ is a solution of the
Stochastic Differential Equation (SDE)
\begin{equation}\label{eq:22032019a1}
dY_t=\eta(Y_t)\,dW_t,
\end{equation}
where $W$ denotes a Brownian motion.
It is known that \eqref{eq:22032019a1} has a
(possibly reaching $0$ in finite time) unique in law
weak solution under the Engelbert-Schmidt condition
that $\eta\colon I^\circ\to\bbR$ is a non-vanishing (possibly irregular)
Borel function  such that
$1/\eta^2$ is locally integrable on $I^\circ$
(see~\cite{ES1985} or Theorem~5.5.7 in~\cite{KS}).
We extend $\eta$ to $I$ by setting $\eta(0)=0$
to enforce absorption in $0$ whenever $0$ is accessible
(whether or not $0$ is accessible depends
on the behavior of $\eta$ near~$0$).
In the case of~\eqref{eq:22032019a1}
the speed measure $m$ is absolutely continuous
with respect to the Lebesgue measure
and is given by the formula
$m(dx)=\frac2{\eta^2(x)}\,dx$.
Notice, however, that our setting is more general
than~\eqref{eq:22032019a1}
because many general diffusions
cannot be characterized in terms of an SDE
(the latter is, in particular, true for general diffusions with sticky points,
which correspond to atoms in the speed measure $m$
inside $I^\circ$,
and which gained an increased interest in recent years;
see \cite{KSS2011}, \cite{Bass2014}, \cite{ep2014},
\cite{HajriCaglarArnaudon:17}
and references therein).

The question that initiated our research is as follows.
Let $\ol h\in(0,1]$ and, for each $h\in(0,\ol h)$,
let $X^h=(X^h_t)_{t\in[0,\infty)}$
be a continuous process viewed as an approximation
of $Y=(Y_t)_{t\in[0,\infty)}$.
Assume that
\begin{equation}\label{eq:22032019a2}
X^h\xrightarrow[]{w}Y,\quad h\to0,
\end{equation}
which means that the distributions of the processes $X^h$
converge weakly to the distribution of~$Y$ (in $C([0,\infty),\bbR)$).
The question is whether we have weak convergence of the hitting times
\begin{equation}\label{eq:22032019a3}
H_0(X^h)\xrightarrow[]{w}H_0(Y),\quad h\to0,
\end{equation}
where, for a process $Z=(Z_t)_{t\in[0,\infty)}$ and $b\in\bbR$,
we use the notation
$H_b(Z)=\inf\{t\in[0,\infty):Z_t=b\}$ (with $\inf\emptyset=\infty$).
The process $X^h$ can be given by some simulation scheme
for the process $Y$ (e.g., the Euler scheme with linear interpolation between grid points for the case
when $Y$ is driven by an SDE) and
$h$ plays the role of discretization parameter.
The question is thus whether we can approximately simulate
the exit time $H_0(Y)$
having at our disposal a convergent scheme for $Y$ itself.
In the case when $0$ is an accessible boundary point for~$Y$,
this is a difficult question because the path functional $H_0$
is discontinuous and hence does not in general preserve
the weak convergence (also see Section~\ref{sec:A_not_suff}
for an example, where \eqref{eq:22032019a2} holds
but \eqref{eq:22032019a3} is violated).
In the case of the Constant Elasticity of Variance (CEV) diffusion
\begin{equation}\label{eq:22032019a4}
dY_t=Y_t^p\,dW_t
\end{equation}
with $p\in[1/2,1)$ and its Euler approximations $X^h$,
the article~\cite{ChiganskyKlebaner:12} proves the weak convergence
$H_{h^\beta}(X^h)\xrightarrow[]{w}H_0(Y)$, $h\to0$,
for any fixed $\beta\in\left(0,\frac{1/2}{1-p}\right)$,
which allows to approximately simulate the exit time
of the CEV diffusion  (to
get continuous-time processes $X^h$, the Euler scheme
is linearly interpolated between the grid points). It is, however, an open question
whether \eqref{eq:22032019a3} holds true even for the CEV diffusion $Y$ and its Euler approximations~$X^h$
(see~\cite{ChiganskyKlebaner:12} for more detail and notice that,
in contrast to the approach in~\eqref{eq:22032019a3}, the main result in~\cite{ChiganskyKlebaner:12}
requires to make the hitting boundary for the Euler scheme also depend on the discretization parameter~$h$).

The first message we would like to convey
is that the EMCEL approximation scheme $\wh X^h$,
which is well-defined for every general diffusion $Y$,
has the property~\eqref{eq:22032019a3}
for \emph{every} general diffusion $Y$.
The EMCEL scheme is introduced in~\cite{aku2018cointossing}
and is shown to be able to approximate every
general diffusion $Y$ in the sense~\eqref{eq:22032019a2}.
This scheme is recalled in Example~\ref{ex:25022019a1} below.
The second message we would like to convey
is that \eqref{eq:22032019a3} holds true
for the CEV diffusion $Y$ given in~\eqref{eq:22032019a4}
and its \emph{weak} Euler approximations $X^h$,
which resolves a variant of the open question mentioned above.

Both messages mentioned in the preceding paragraph
follow from our main result, Theorem~\ref{th:main_new},
where we consider the class of approximating schemes $X^h$
described in \eqref{eq:def_X}--\eqref{eq:13112017a1} below
and present a sufficient condition for~\eqref{eq:22032019a3}
(Condition~(B) below). In fact,
Theorem~\ref{th:main_new}
contains more than just~\eqref{eq:22032019a3}
under Condition~(B):
it is a functional limit theorem
both for the paths of the processes
and for their exit times;
see Section~\ref{sec:main_res} for more detail.
The mentioned messages are obtained as follows:
the EMCEL scheme satisfies Condition~(B)
for every general diffusion~$Y$;
the weak Euler scheme satisfies Condition~(B)
for $Y$ given by~\eqref{eq:22032019a4}.

We now discuss related literature.
The article~\cite{Ethier:79} proves weak convergence
of certain absorption times
that arise naturally in population genetics.
The question of simulating the hitting times
of squared Bessel processes
is studied in
\cite{DH:13} and~\cite{DH:17}.
Exact simulation of the first-passage time of diffusions
(in the style of~\cite{BPR:06})
is considered in~\cite{HZ:17}.
Under some regularity assumptions it is proved
in~\cite{BGG:17} that the discrete exit time
of the Euler scheme of a diffusion
converges in $L^1$ with the optimal rate $1/2$
to the continuous exit time.
For more information about the distributions
of the exit times of diffusions
see \cite{BR:16},
\cite{KaratzasRuf:16} and references therein.

\smallskip
The paper is structured as follows.
In Section~\ref{sec:as} we formally describe our setting,
the approximation schemes we use in the paper
and recall the functional limit theorem from~\cite{aku2018cointossing}
ensuring~\eqref{eq:22032019a2} under a certain Condition~(A).
Section~\ref{sec:main_res} presents and discusses the main result,
Theorem~\ref{th:main_new},
which is proved in Section~\ref{sec:proofs}.
The assumption in Theorem~\ref{th:main_new},
Condition~(B), is stronger than Condition~(A).
Section~\ref{sec:A_not_suff} contains an example
showing that Condition~(A) does not suffice for~\eqref{eq:22032019a3}.
Finally, in Section~\ref{sec:CEV} we discuss
an application of our result to the CEV diffusion~\eqref{eq:22032019a4}
and to squared Bessel processes.

\section{Approximation schemes}\label{sec:as}
Let $(\Omega, \cF, (\cF_t)_{t \ge 0}, (P_y)_{y \in I}, (Y_t)_{t \ge 0})$ be a one-dimensional continuous strong Markov process in the sense of Section~VII.3 in~\cite{RY}. We refer to this class of processes as {\itshape general diffusions} in the sequel. We assume that the state space is an open, half-open or closed interval $I \subseteq \R$. We denote by $I^\circ=(l,r)$ the interior of $I$, where $-\infty\leq l<r\leq \infty$, and we set $\ol I=[l,r]$.
Recall that by the definition we have $P_y[Y_0=y]=1$ for all $y\in I$. 
We further assume that $Y$ is regular. This means that for every $y\in I^\circ$ and $x\in I$ we have that $P_y[H_x(Y)<\infty]>0$, where $H_x(Y)=\inf\{t\geq 0: Y_t=x \}$.
Moreover, for $a<b$ in $\ol I$ we denote by $H_{a,b}(Y)$
the first exit time of $Y$ from $(a,b)$,
i.e., $H_{a,b}(Y) = H_a(Y)\wedge H_b(Y)$.
Without loss of generality
we suppose that the diffusion $Y$ is in natural scale.
If $Y$ is not in natural scale,
then there exists a strictly increasing continuous function
$s\colon I \to \R$, the so-called scale function,
such that $s(Y)$ is in natural scale.

Let $m$ be the speed measure of the Markov process $Y$
on~$I^\circ$ (see~VII.3.7 in~\cite{RY}).
Recall that for all $a<b$ in $I^\circ$ we have
\begin{equation}\label{eq:06072018a1}
0<m([a,b])<\infty.
\end{equation}
We assume that if a boundary point
(that is, $l$ or $r$)
is accessible, then it is absorbing.
For our purposes,
this assumption is without loss of generality,
as for any general diffusion $Y$ --- 
possibly with reflecting boundary points ---
the stopped process $Y^{H_{l,r}(Y)}$ is a general diffusion with
absorbing boundary points and has the same exit times as $Y$.

\begin{ex}[Driftless SDE with possibly irregular diffusion coefficient]\label{ex:sde}
Consider the case,
where inside $I^\circ$ the process $Y$ is driven by the SDE
\begin{equation}\label{eq:27092018a1}
dY_t=\eta(Y_t)\,dW_t,
\end{equation}
where $\eta\colon I^\circ\to\bbR$ is a Borel function
satisfying the Engelbert-Schmidt conditions
\begin{gather}
\eta(x)\ne0\;\;\forall x\in I^\circ,
\label{eq:27092018a2}\\[1mm]
\eta^{-2}\in L^1_{\loc}(I^\circ)
\label{eq:27092018a3}
\end{gather}
($L^1_{\loc}(I^\circ)$ denotes the set of Borel functions
locally integrable on~$I^\circ$).
Under \eqref{eq:27092018a2}--\eqref{eq:27092018a3}
SDE~\eqref{eq:27092018a1}
has a unique in law
(possibly exiting $I^\circ$ in finite time)
weak solution;
see~\cite{ES1985} or Theorem~5.5.7 in~\cite{KS}.
We make the convention that $Y$ remains constant
after reaching $l$ or $r$ in finite time,
which makes the boundary points absorbing whenever accessible.
This is a particular case of our setting,
where the speed measure of $Y$
on $I^\circ$ is given by the formula
\begin{equation}\label{eq:speed_measure_sde}
m(dx)=\frac 2{\eta^2(x)}\,dx.
\end{equation}
\end{ex}

We now describe the approximation schemes considered in this paper.
Let $\ol h \in (0,1]$ and suppose that for every $h \in (0, \ol h)$ we are given a Borel function $a_{h}\colon \ol I \to [0,\infty)$ such that $a_h(l)=a_h(r)=0$ and for all $y\in I^\circ$ we have $y\pm a_h(y)\in I$. We refer to each function $a_h$ as a \emph{scale factor}.
We next construct a family of Markov chains associated to the family of scale factors $(a_h)_{h\in (0, \ol h)}$. To this end
%For the rest of this section
we fix a starting point $y \in I^\circ$ of $Y$.
Let $(\xi_k)_{k \in \bbN}$ be an iid sequence of random variables,
on a probability space with a measure $P$,
satisfying $P(\xi_k = \pm 1) = \frac12$. 
%For every $h\in [0,\ol h]$ and $k\in \N_0$ let $t^{h}_k=kh$. 
We denote by $(X^h_{kh})_{k \in \bbN_0}$ the Markov chain defined by
\begin{equation}\label{eq:def_X}
X^h_0 = y
\quad\text{and}\quad
X^h_{(k+1)h} = X^h_{kh}+ a_h(X^h_{kh}) \xi_{k+1}, \quad \text{ for } k \in \bbN_0. 
\end{equation}
We extend $(X^h_{kh})_{k \in \bbN_0}$ to a continuous-time process by linear interpolation, i.e., for all $t\in[0,\infty)$, we set
\begin{equation}\label{eq:13112017a1}
X^h_t = X^h_{\lfloor t/h \rfloor h} + (t/h - \lfloor t/h \rfloor) (X^h_{(\lfloor t/h \rfloor +1)h} - X^h_{\lfloor t/h \rfloor h}). 
\end{equation}
To highlight the dependence of $X^h=(X^h_t)_{t\in[0,\infty)}$ on the starting point $y\in I^\circ$ we also sometimes write~$X^{h,y}$.

Next we recall the main result in~\cite{aku2018cointossing},
which allows to approximate $Y$ with such Markov chains.
Here and in the sequel
we equip $C([0,\infty),\bbR)$ with the topology
of uniform convergence on compact intervals,
which is generated, e.g., by the metric
$$
\rho(x,y)=\sum_{n=1}^\infty 2^{-n}
\left(\|x-y\|_{C[0,n]}\wedge1\right),
\quad x,y\in C([0,\infty),\bbR),
$$
where $\|\cdot\|_{C[0,n]}$ denotes the sup norm
on $C([0,n],\bbR)$.
Moreover, we need the following condition.

\paragraph{Condition~(A)}
For all compact subsets $K$ of $I^\circ$ it holds that
\begin{equation}
\sup_{y\in K }  \left|
\frac{1}{2}\int_{(y-a_h(y),y+a_h(y))} (a_h(y)-|u-y|)\,m(du)
-h
\right|
\in o(h), \quad h \to 0.
\end{equation}

\begin{theo}[Theorem~1.1 in~\cite{aku2018cointossing}]\label{thm_main_coin}
Assume that Condition~(A) is satisfied. Then, for any $y\in I^\circ$,
the distributions of the processes
$(X^{h,y}_{t})_{t \in [0,\infty)}$ under $P$
converge weakly to the distribution of
$(Y_t)_{t \in [0,\infty)}$ under $P_{y}$, as $h \to 0$;
i.e., for every bounded and continuous
functional
$F\colon C([0,\infty),\bbR)\to\bbR$,
it holds that
\ben\label{eq:13112017a2}
E[F(X^{h,y})]\to E_y[F(Y)], \quad h\to 0.
\een
\end{theo}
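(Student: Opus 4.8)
The plan is to prove weak convergence in $C([0,\infty),\bbR)$ via the standard two-part recipe for functional limit theorems: first establish convergence of finite-dimensional distributions (or, more efficiently, identify the limit through a martingale-problem / generator argument), and then prove tightness of the laws of $(X^{h,y})_{h}$ in $C([0,\infty),\bbR)$. Since the limit $Y$ is a regular continuous strong Markov process in natural scale with speed measure $m$, the natural route is to show that any weak limit point of the sequence solves the martingale problem characterizing $Y$, and then invoke uniqueness (which holds because $Y$ is a general diffusion, uniquely determined in law by its scale and speed measure). Condition~(A) is precisely what is engineered to make the one-step increments of the Markov chain match the infinitesimal behavior of $Y$, so it should feed directly into the generator computation.

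More concretely, the key object is the one-step conditional structure of the chain \eqref{eq:def_X}. Because $\xi_{k+1}=\pm1$ with probability $\tfrac12$ and $X^h$ is in natural scale (symmetric $\pm a_h$ steps), the chain $X^h$ is itself a martingale, and the first message I would exploit is that $Y$ in natural scale is a local martingale too. The real content is matching the \emph{time scale}: for a suitable test function $f$ (say $f\in C^2_c(I^\circ)$, or more robustly functions built from the scale/speed data), I would compute the one-step conditional expectation $E[f(X^h_{(k+1)h})\mid X^h_{kh}=x]-f(x)$ and show, using a Taylor-type expansion together with the speed-measure expression, that it equals $h\,(\cL f)(x)+o(h)$ uniformly on compacts, where $\cL$ is the generator $\tfrac{d}{dm}\tfrac{d}{dx}$ of $Y$. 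The quantity $\tfrac12\int_{(x-a_h(x),x+a_h(x))}(a_h(x)-|u-x|)\,m(du)$ appearing in Condition~(A) is exactly the expected time increment $Y$ needs to traverse a symmetric excursion of size $a_h(x)$; Condition~(A) says this equals $h+o(h)$ uniformly, which is what forces the discrete time parameter $kh$ to be the correct clock for the limiting diffusion. From this I would assemble, for each such $f$, a discrete approximate martingale $f(X^h_{kh})-\sum_{j<k} h(\cL f)(X^h_{jh})$, pass to continuous time via the interpolation \eqref{eq:13112017a1}, and show in the limit that $f(Y_t)-\int_0^t(\cL f)(Y_s)\,ds$ is a martingale — identifying every limit point as $Y$.

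For tightness I would use a standard moment/modulus criterion. Controlling increments is delicate here because $a_h$ may be large near points where $m$ is small (e.g.\ near the boundary), so I expect the uniform-on-compacts phrasing of Condition~(A) to be essential for bounding the quadratic variation of $X^h$ on compact space-intervals and thereby the time spent by $X^h$ outside a compact set. Concretely, I would first stop the processes upon exiting a large interval $(a,b)\Subset I^\circ$, prove tightness of the stopped processes (where Condition~(A) gives uniform control of the expected exit time and of the increment moments), and then let $(a,b)\uparrow I^\circ$, using that $Y$ does not explode in finite time inside $I^\circ$ (it may only reach an accessible boundary, after which it is absorbed and constant, posing no tightness obstruction).

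\textbf{Main obstacle.} The hardest step is the uniform generator estimate near irregular points of $m$ and near accessible boundaries: the scale factors $a_h$ need not be small in regions where the speed measure is thin, and $\cL f=\tfrac{d}{dm}\tfrac{d}{dx}f$ must be interpreted in a weak (measure-theoretic) sense since $m$ may have atoms (sticky points) or be singular. Making the Taylor expansion rigorous in this generality — so that the error is genuinely $o(h)$ \emph{uniformly} on compacts rather than merely pointwise — is where Condition~(A) must be used most carefully, and it is presumably why the authors single it out as strictly stronger than Condition~(A) suffices only for the path convergence in Theorem~\ref{thm_main_coin} but is needed in its present uniform form to carry the argument through.
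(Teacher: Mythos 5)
The paper does not actually prove this statement: it is quoted from \cite{aku2018cointossing}, and the only proof-relevant information visible here is in Section~\ref{sec:proofs}, which reveals that the argument there is a coupling one. Proposition~3.4 of that reference produces stopping times $(\tau^h_k)_{k\in\N_0}$ with $\Law_{P_y}(Y_{\tau^h_k};k\in\N_0)=\Law_P(X^{h,y}_{kh};k\in\N_0)$ (a Skorokhod-type embedding of the chain into $Y$), and the theorem then follows by showing that $\sup_{k\le \lfloor T/h\rfloor}|\tau^h_k-kh|\to0$ in probability under Condition~(A), so that the interpolated embedded process converges to $Y$ in probability uniformly on compacts; weak convergence is immediate. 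Your martingale-problem-plus-tightness proposal is therefore a genuinely different route, not a reconstruction of the paper's.

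As a plan it is reasonable in outline, and your central computation is sound: for $f$ with $f'$ absolutely continuous with respect to $m$ and $g=\frac{d}{dm}\frac{d}{dx}f$ continuous, one has $\frac12\bigl(f(y+a)+f(y-a)\bigr)-f(y)=\frac12\int(a-|u-y|)^+g\,dm$, so Condition~(A) does convert the symmetric $\pm a_h(y)$ step into $h(\cL f)(y)+o(h)$ uniformly on compacts. But two steps are genuinely missing, and they are exactly where the embedding approach earns its keep. First, you invoke uniqueness of the martingale problem for $\cL=\frac{d}{dm}\frac{d}{dx}$ with an arbitrary speed measure (atoms, singular parts) and absorbing accessible boundaries; this is not off-the-shelf, since uniqueness in law for general diffusions is ordinarily established via the time-change/embedding representation --- the very tool you are trying to avoid --- so your identification step rests on a result that would itself need proof in this generality. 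Second, Condition~(A) is uniform only on compact subsets of $I^\circ$ and gives no control near an accessible boundary, where $a_h(y)$ can be of the order of the distance to the boundary; your localization (stop at $(a,b)\subset\subset I^\circ$, then let $(a,b)\uparrow I^\circ$) does not by itself exclude that the chains spend macroscopic time oscillating near the boundary rather than being absorbed, and reassembling the unstopped limit from the stopped ones is precisely the delicate point (compare Section~\ref{sec:A_not_suff}, where the behavior of $a_h$ near the boundary is what breaks the exit-time convergence). In the embedding proof neither issue arises, because $X^{h,y}$ is realized in law as $Y$ itself sampled along stopping times. Finally, your closing sentence misstates the logic: Condition~(A) is the hypothesis of this theorem, not something strictly stronger than what it requires; the strictly stronger Condition~(B) is needed only for the exit-time statement of Theorem~\ref{th:main_new}.
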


\section{Main result}\label{sec:main_res}
We first introduce an auxiliary subset of $I^\circ$.
If $l> -\infty$, we define, for all $h\in(0,\ol h)$,
\begin{align*}
l_h := l+ \inf\left\{ a \in \left(0,\frac{r-l}2\right]:
a<\infty
\;\;\text{and}\;\;
\frac12\int_{(l, l+2a)} (a - |u-(l+a)|) m(du) \ge h \right\}, 
\end{align*}
where we use the convention $\inf \emptyset = \infty$.
If $l = -\infty$, we set $l_h = -\infty$. 
Similarly, if $r < \infty$, then we define, for all $h\in(0,\ol h)$,
\begin{align*}
r_h := r- \inf\left\{ a \in \left(0,\frac{r-l}2\right]:
a<\infty
\;\;\text{and}\;\;
\frac12\int_{(r-2a, r)} (a - |u-(r-a)|) m(du) \ge h \right\}.
\end{align*}
If $r = \infty$, we set $r_h = \infty$. 
We refer to Section 3 in \cite{aku2018cointossing} for a discussion of the extended real numbers $r_h$ and $l_h$.
In particular, it holds that $l$ is inaccessible if and only if $l_h = l$ for all $h\in (0,\ol h)$. Similarly, $r$ is inaccessible if and only if $r_h = r$ for all $h\in (0,\ol h)$.
If $l$ or $r$ are accessible it holds that $l_h\searrow l$ or $r_h\nearrow r$, respectively, as $h\searrow 0$.

Now the auxiliary subset is defined by
\begin{equation}\label{eq:22022019a2}
I_h = (l_h,r_h)\cup\left\{y\in I^\circ \colon y\pm a_h(y)\in I^\circ\right\}, \quad h\in (0,\ol h).
\end{equation}
We observe that $(l_h,r_h)\subseteq I_h\subseteq I^\circ$
and notice that $I_h$ is, in general, not connected,
as it depends on the behavior of the scale factor $a_h$,
which is only a Borel function $a_h\colon\ol I\to[0,\infty)$
with the properties $a_h(l)=a_h(r)=0$ and $y\pm a_h(y)\in I$
for all $y\in I^\circ$.
But, as $I_h\supseteq (l_h,r_h)$, ``gaps''
can appear only close to accessible boundaries.

In order to formulate the main result we need to discuss 
the following conditions.

\paragraph{Condition~(B)}
(i) There exist $B\in [1,\infty)$, $\gamma\in (\frac{1}{2},1]$ 
and a function $\alpha \colon (0,\ol h) \to [0,\infty)$ with $\lim_{h\searrow 0} \frac{\alpha(h)}{h}=1$ such that for all $h\in (0,\ol h)$ and $y\in I_h$ 
\begin{equation}\label{eq:01042019a1}
\alpha (h) \le \frac{1}{2}\int_{(y-a_h(y),y+a_h(y))} (a_h(y)-|u-y|)\,m(du)\le Bh^\gamma.
\end{equation}

(ii) For every compact subset $K$ of $I^\circ$ there exists a function
$\beta_K \colon (0,\ol h) \to [0,\infty)$ with $\lim_{h \searrow 0} \frac{\beta_K(h)}{h}=1$
such that for all $h\in (0,\ol h)$ and $y\in K$ 
\begin{equation}\label{eq:04042019b1}
\frac{1}{2}\int_{(y-a_h(y),y+a_h(y))} (a_h(y)-|u-y|)\,m(du)
\le \beta_K(h).
\end{equation}

We avoid introducing ``Condition~(C)'' in this paper
to escape a collision with an\linebreak
unrelated Condition~(C)
in the article~\cite{aku2019wasserstein},
which studies convergence rates of the EMCEL and related schemes,
and proceed with

\paragraph{Condition~(D)}
It holds that
\begin{equation}\label{eq:23032019a3}
\sup_{y\in I_h}  \left|
\frac{1}{2}\int_{(y-a_h(y),y+a_h(y))} (a_h(y)-|u-y|)\,m(du)
-h
\right|
\in o(h), \quad h \to 0.
\end{equation}

\smallskip
It is easy to see that Condition~(B) is stronger than Condition~(A), while Condition~(D) 
is stronger than Condition~(B). We prove our main result, Theorem~\ref{th:main_new} below, under Condition~(B).
In Section~\ref{sec:A_not_suff} we provide an example, where Condition~(A) is satisfied but the claim of Theorem~\ref{th:main_new} does not hold true. Condition~(D) can be viewed as a symmetric sufficient condition for the claim of Theorem~\ref{th:main_new}, and it is, in fact, enough to deliver the first message mentioned in the introduction.
On the contrary, to provide an answer to the open question about the convergence of the exit times for the (weak) Euler approximations of the CEV diffusion, we do need the full strength of Theorem~\ref{th:main_new} under Condition~(B).

We recall that $X^{h,y}$ and $Y$
denote the whole continuous-time processes:
$X^{h,y}=(X^{h,y}_t)_{t\in[0,\infty)}$,
$Y=(Y_t)_{t\in[0,\infty)}$.
To formulate the main result of this article,
Theorem~\ref{th:main_new}, we equip
$[0,\infty]$ with the metric
$$
d(s,t)=\left|\frac{s}{1+s}-\frac{t}{1+t} \right|,\quad s,t\in[0,\infty],
$$
and we use the standard product topology on product spaces,
which is generated, e.g., by the metric on the product space
defined as sum of the distances between the components
(also recall the paragraph preceding Condition (A)).
%The main result of the present article is the following.

\begin{theo}\label{th:main_new}
Assume that Condition~(B) is satisfied. Then, for any $y\in I^\circ$,
the distributions of the random elements
$(H_l(X^{h,y}),H_r(X^{h,y}),X^{h,y})$ under $P$ converge weakly
to the distribution of $(H_l(Y),H_r(Y),Y)$ under $P_y$, as $h\to 0$; i.e.,
for every bounded and continuous
functional
$F\colon [0,\infty]^2\times C([0,\infty),\bbR)\to \R$, it holds that
\begin{equation}\label{eq:23032019a1}
E[F(H_l(X^{h,y}),H_r(X^{h,y}),X^{h,y})]\to E_y[F(H_l(Y),H_r(Y),Y)], \quad h\to 0.
\end{equation}
\end{theo}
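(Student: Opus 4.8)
The plan is to sidestep the discontinuity of the hitting-time functionals by working on a single probability space on which $X^{h,y}$ is \emph{embedded} into $Y$. Write $G_h(y):=\frac12\int_{(y-a_h(y),y+a_h(y))}(a_h(y)-|u-y|)\,m(du)$ for the quantity in Condition~(B), and recall that, since $Y$ is a continuous local martingale (natural scale), $G_h(y)$ is exactly the $P_y$-expectation of the exit time of $Y$ from the symmetric interval $(y-a_h(y),y+a_h(y))$, which is left at its two endpoints with probability $\tfrac12$ each. I therefore realise the driving signs as $\xi_{k+1}=\sgn(Y_{T_{k+1}}-Y_{T_k})$, where $T_0=0$ and $T_{k+1}$ is the first exit of $Y$ after $T_k$ from the symmetric interval centred at $Y_{T_k}$ of radius $a_h(Y_{T_k})$. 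This yields a coupling with $X^{h,y}_{kh}=Y_{T_k}$ for all $k$; using $a_h(l)=a_h(r)=0$ to enforce absorption, one checks that $H_l(X^{h,y})=K_l\,h$ with $K_l=\inf\{k:Y_{T_k}=l\}$ and, crucially, that $T_{K_l}=H_l(Y)$ \emph{exactly}, because between consecutive exits $Y$ stays in a closed interval whose lower endpoint is $\ge l$ and equals $l$ only at an exit. The analogous identities hold at $r$. The theorem thus reduces to the clock-comparison statement $K_l\,h\to T_{K_l}$ and $K_r\,h\to T_{K_r}$ in probability; combined with the path convergence of Theorem~\ref{thm_main_coin} (which, in this coupling, is $\sup_{k:\,kh\le T}|T_k-kh|\to0$ for every $T$), this gives weak convergence of the full triple.

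The key identity is the telescoping $K_l\,h-T_{K_l}=\sum_{k=1}^{K_l}(h-\tau_k)$ with $\tau_k:=T_k-T_{k-1}$, which I decompose as
\begin{equation*}
K_l\,h-T_{K_l}=\underbrace{\sum_{k=1}^{K_l}\bigl(h-G_h(Y_{T_{k-1}})\bigr)}_{\text{drift}}\;-\;\underbrace{\sum_{k=1}^{K_l}\bigl(\tau_k-G_h(Y_{T_{k-1}})\bigr)}_{=:M_{K_l},\ \text{martingale}}.
\end{equation*}
Since $E[\tau_k\mid\cF_{T_{k-1}}]=G_h(Y_{T_{k-1}})$, the second sum is a martingale, and for exit times of natural-scale diffusions one has the standard bound $\Var(\tau_k\mid\cF_{T_{k-1}})\le C\,G_h(Y_{T_{k-1}})^2$, which on $I_h$ becomes $\le C\,B^2h^{2\gamma}$ by Condition~(B)(i). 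Because each step consumes intrinsic clock at least $\alpha(h)\sim h$, at most $O(1/h)$ steps occur before any fixed intrinsic time horizon $T$; localizing on $\{H_l(Y)\le T\}$, the predictable quadratic variation obeys $E\langle M\rangle\lesssim h^{2\gamma}\cdot h^{-1}=h^{2\gamma-1}\to0$ \emph{precisely because} $\gamma>\tfrac12$, so $M_{K_l}\to0$ in $L^2$ by optional stopping and then $T\to\infty$. This is where the passage from Condition~(A) to Condition~(B) is genuinely used.

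It remains to control the drift $\sum_{k=1}^{K_l}(h-G_h(Y_{T_{k-1}}))$. For any compact $K\subset I^\circ$ one has $K\subseteq(l_h,r_h)\subseteq I_h$ for $h$ small (as $l_h\searrow l$, $r_h\nearrow r$), so Condition~(B)(i)--(ii) give $\alpha(h)\le G_h\le\beta_K(h)$ there with both bounds $\sim h$; since the number of steps with $Y_{T_{k-1}}\in K$ is $O(1/h)$, their contribution is $O(1/h)\cdot o(h)=o(1)$. The genuinely delicate part, and the \textbf{main obstacle}, is the boundary layer: the steps with $Y_{T_{k-1}}$ near $l$ (or $r$), including the shrinking absorption zone $[l,l_h]$ of states failing to lie in $I_h$, on which Condition~(B) says nothing. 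Here the strategy is that the \emph{intrinsic} clock spent in a layer $(l,l+\delta]$ is of the form $H_l(Y)-H_{l+\delta}(Y)$ and tends to $0$ as $\delta\to0$ by continuity of $Y$ and finiteness of $m$, while the lower bound $G_h\ge\alpha(h)\sim h$ caps the number of layer-steps by that intrinsic clock divided by $\alpha(h)$; multiplying by $h$ shows the layer contributes to $K_l\,h$ at most a vanishing multiple of the (already small) intrinsic layer-clock, and the zone $[l,l_h]$ is absorbed into the same bound since $l_h\searrow l$. Letting first $h\to0$ and then $\delta\to0$ gives $K_l\,h-T_{K_l}\to0$, and symmetrically at $r$. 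The crux throughout is this thin, shrinking layer outside $I_h$, where the two-sided bound of Condition~(B)(i) and the $L^2$-control demanding $\gamma>\tfrac12$ must be played off against the number of near-boundary steps.
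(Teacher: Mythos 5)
Your overall architecture --- embed the chain into $Y$ via a sequence of stopping times, compare the discrete clock $K_lh$ with the intrinsic clock $T_{K_l}=H_l(Y)$, and use a martingale maximal inequality whose error is $O(h^{\gamma-1/2})$ so that $\gamma>\tfrac12$ is what saves the day --- is close in spirit to the paper's proof, which is also built on the embedding of Proposition~3.4 of \cite{aku2018cointossing}. However, there are two concrete points where your argument, as written, fails, both located exactly at the crux you yourself identify (the near-boundary steps).

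First, your cap on the number of near-boundary steps rests on the lower bound $G_h\ge\alpha(h)$, which Condition~(B)(i) provides \emph{only for $y\in I_h$}. On $I^\circ\setminus I_h$ --- the ``absorption zone'' you mention --- Condition~(B) gives no lower bound whatsoever; there one only knows $G_h\le h$, and $G_h$ can be arbitrarily small relative to $h$. So ``number of layer-steps $\le$ intrinsic layer-clock divided by $\alpha(h)$'' is unavailable precisely where it is most needed, and this is the region where the counterexample of Section~\ref{sec:A_not_suff} lives, so it cannot be absorbed silently ``into the same bound''. It is repairable: for $y\in I^\circ\setminus I_h$ one necessarily has $y-a_h(y)=l$ or $y+a_h(y)=r$, so each such step absorbs with probability $\tfrac12$ and their total number is stochastically geometric --- but this observation is missing. (The paper avoids the issue differently: its embedding assigns conditional expected duration \emph{exactly} $h$ to steps started outside $I_h$, so that $E[\tau_k\mid\cG_{k-1}]\ge h\wedge\alpha(h)$ holds everywhere, which is what drives Lemma~\ref{lem:ub_temp_diff_prob}; with your pure exit-time coupling this identity is false off $I_h$.) Second, your layer estimate controls $h\cdot\#\{\text{layer steps}\}$ and $\sum_{\text{layer}}\tau_k$, i.e.\ the raw difference $\sum_{\text{layer}}(h-\tau_k)$, but the object appearing in your decomposition is the drift $\sum_{\text{layer}}(h-G_h)$, whose modulus can be of order $\#\{\text{layer steps}\}\cdot Bh^{\gamma}\approx Bh^{\gamma-1}\cdot(\text{layer clock})$, which diverges as $h\to0$ for fixed layer width when $\gamma<1$; you would have to restructure (drift/martingale split only on compacts, raw bound on the layer). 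Note also that the paper never proves your two-sided claim $K_lh-T_{K_l}\to0$ head-on: the inequality $H_l(Y)\wedge T\le H_l(Y^h)\wedge T+\eps$ is obtained by a soft argument with an intermediate level $\ol l$ and a.s.\ continuity of $H_{\ol l}\wedge T$ (under Condition~(A) alone, Proposition~\ref{prop:bound_above}), and only the reverse inequality uses the one-sided clock estimate $k(h\wedge\alpha(h))-\tau^h_k\le\eps$; this asymmetry is what lets the proof exploit the asymmetry of Condition~(B). Smaller issues: ``at most $O(1/h)$ steps occur before intrinsic time $T$'' is not a pathwise statement (it holds only in expectation or after a further maximal inequality; the paper instead truncates the index at the deterministic $\lfloor(T+\eps)/h\rfloor$), and the bound $\Var(\tau_k\mid\cF_{T_{k-1}})\le C\,G_h(Y_{T_{k-1}})^2$, while true, deserves a proof (e.g.\ via $\sup_zE_z[\tau]\le2E_y[\tau]$ for the symmetric interval).
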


As usual, for the weak convergence~\eqref{eq:23032019a1}
we use the shorthand notation
\begin{equation}\label{eq:23032019a2}
(H_l(X^{h,y}),H_r(X^{h,y}),X^{h,y})
\xrightarrow[]{w}
(H_l(Y),H_r(Y),Y),\quad h\to0.
\end{equation}
We remark that \eqref{eq:23032019a2}
immediately implies the weak convergence of the marginals.
In particular,
Theorem~\ref{th:main_new} establishes more than Theorem~\ref{thm_main_coin},
but this is achieved under the stronger Condition~(B)
and cannot be achieved under Condition~(A)
(see Section~\ref{sec:A_not_suff}).
Furthermore, \eqref{eq:23032019a2}
immediately implies the weak convergence
$$
(H_{l,r}(X^{h,y}),X^{h,y})
\xrightarrow[]{w}
(H_{l,r}(Y),Y),\quad h\to0,
$$
as $(s,t,x)\mapsto(s\wedge t,x)$ is a continuous function
$[0,\infty]^2\times C([0,\infty),\bbR)\to[0,\infty]\times C([0,\infty),\bbR)$.

It is important to note that for every speed measure $m$ there exists a family of scale factors such that Condition~(D), and hence Condition~(B), is satisfied. 
Consequently, Theorem~\ref{th:main_new} entails that the exit times of
\emph{every}
general diffusion $Y$ can be approximated
with the help of Markov chains of the form~\eqref{eq:def_X}.
These scale factors are provided in the next example.

\begin{ex}[EMCEL approximations]\label{ex:25022019a1}
Let $h\in(0,\ol h)$.
The EMCEL$(h)$ scale factor $\wh a_h$ is defined by
$\wh a_h(l)=\wh a_h(r)=0$
and, for all $y\in I^\circ$,
\begin{equation}\label{sf absorbing case}
\wh a_h(y) = \sup\left\{a \ge 0: y\pm a \in I \text{ and } \frac{1}{2}\int_{(y-a,y+a)} (a-|z-y|)\,m(dz) \le h\right\}.
\end{equation}
The associated process
defined in \eqref{eq:def_X}--\eqref{eq:13112017a1}
is denoted by $\wh X^h$ and referred to as
Embeddable Markov Chain with Expected time Lag~$h$
(we write shortly $\wh X^h\in\text{EMCEL}(h)$). The
whole family $(\wh X^h)_{h\in (0,\ol h)}$ is referred to as the
\emph{EMCEL approximation scheme.}
Alternatively, we simply say
\emph{EMCEL approximations.}

We now explain in more detail what is included in
Definition~\eqref{sf absorbing case}.
For all $y\in I^\circ$, we define $a_I(y)=\min\{y-l,r-y\}$ ($\in(0,\infty]$)
and notice that, for $a\ge0$, it holds
$y\pm a\in I^\circ$ if and only if $a<a_I(y)$.
Fix $y\in I^\circ$.
It follow from~\eqref{eq:06072018a1} that
$\int_{(y-a,y+a)} (a-|z-y|)\,m(dz)<\infty$ whenever $a\in[0,a_I(y))$.
Therefore, the function
$$
a\mapsto\int_{(y-a,y+a)}(a-|z-y|)\,m(dz)\equiv\int_I (a-|z-y|)^+\,m(dz)
$$
is strictly increasing and continuous on $[0,a_I(y))$ (by the dominated convergence theorem).
The definitions of $l_h$ and $r_h$ yield that, 
for $y\in(l_h,r_h)$, the number $\wh a_h(y)$ is a unique positive root
of the equation (in~$a$)
\begin{equation}\label{eq:22022019a3}
\frac{1}{2}\int_{(y-a,y+a)} (a-|z-y|)\,m(dz) = h,
\end{equation}
while, for $y\in(l,l_h]$ (resp., $y\in[r_h,r)$),
$\wh a_h(y)$ is chosen to satisfy
\begin{equation}\label{eq:22022019a4}
y-\wh a_h(y)=l\qquad\text{(resp., }y+\wh a_h(y)=r).
\end{equation}
It follows from~\eqref{eq:22022019a4} that
the set $I_h$ of~\eqref{eq:22022019a2}
corresponding to the EMCEL$(h)$ scale factor $\wh a_h$
(and, naturally, denoted by $\wh I_h$) is simply
$$
\wh I_h=(l_h,r_h).
$$
This yields that the left-hand side in~\eqref{eq:23032019a3}
vanishes for the EMCEL approximations and,
therefore, for this scheme Condition~(D), and hence Condition~(B),
is satisfied.
\end{ex}

\begin{remark}
As shown in Example~\ref{ex:25022019a1} the EMCEL scale factors always satisfy Condition~(D). 
However, Equation~\eqref{eq:22022019a3}
defining these scale factors
can rarely be solved in closed form.
Therefore, in practice we usually need to solve 
\eqref{eq:22022019a3} approximately. 
Condition~(D) dictates that we 
need to solve \eqref{eq:22022019a3} with an error of order $o(h)$ 
uniformly in $y\in I_h$ in order to
guarantee convergence of the associated 
exit times.
Condition~(B) is a certain asymmetric
weakening of the required precision. 

Moreover, we note that Theorem~\ref{th:main_new} is not only applicable to perturbations of the EMCEL approximation but also can be used to
derive convergence results for exit times in other approximation methods, e.g., for the weak Euler scheme.
This is illustrated in Section~\ref{sec:CEV}.
\end{remark}

\begin{remark}\label{rem:01042019a1}
One might wonder why we consider only the question
of convergence of the exit times from $I^\circ$
rather than considering the task of approximating
$H_b(Y)$ for any $b\in I$.
The answer is that this question is interesting
(and difficult --- see the paragraph containing
\eqref{eq:22032019a2}, \eqref{eq:22032019a3}
and~\eqref{eq:22032019a4}
in the introduction)
only for $b\in\{l,r\}$.
For $b\in I^\circ$, the path functional
$H_b\colon C([0,\infty),\bbR)\to[0,\infty]$
is $P_y$-a.s.\ continuous for any $y\in I^\circ$.
Indeed, the functional $H_b$, for 
$b\in  I^\circ$, is only discontinuous at paths $x \in C([0,\infty),\bbR)$
that at some point in time touch the level $b$ but do not cross it, i.e.,
at paths $x$ 
with local maximum or minimum value $b$.
Formally,
in the case $y\ge b\in I^\circ$ (resp., $y\le b\in I^\circ$)
the points of discontinuity of $H_b$
intersected with
$\{x\in C([0,\infty),\bbR):x(0)=y\}$
are contained in the set
$$
\bigcup_{q\in(0,\infty)\cap\bbQ}
\{x\in C([0,\infty),\bbR):\inf_{t\in[0,q]}x(t)=b<x(q)\}
$$
(resp., in the set given by the similar formula,
where ``$\inf$'' is replaced by ``$\sup$''
and ``$<$'' by ``$>$''),
while the latter set has $P_y$-measure zero,
which follows from the strong Markov property of $Y$
and the oscillating behavior of $Y$ at time zero.
More precisely, we refer to the property
$$
P_b(H^+_b(Y)=0)=P_b(H^-_b(Y)=0)=1,
$$
where
$$
H^+_b(Y)=\inf\{t\ge0:Y_t>b\}
\quad\text{and}\quad
H^-_b(Y)=\inf\{t\ge0:Y_t<b\},
$$
which follows from the construction of $Y$ as a time-changed Brownian motion,
see Theorem~V.47.1 in~\cite{RogersWilliams}.
Therefore, for any $b\in I^\circ$, we obtain
$H_b(X^{h,y})\xrightarrow[]{w}H_b(Y)$, $h\to0$,
for any starting point $y\in I^\circ$
under Condition~(A) just as a corollary
of Theorem~\ref{thm_main_coin}.
Moreover, the same reasoning
immediately leads to the following extension of
Theorem~\ref{th:main_new}:

Assume that Condition~(B) is satisfied.
Let $n\in\bbN_0$ and $b_1,\ldots,b_n\in I^\circ$.
Then, for any $y\in I^\circ$,
the distributions of the random elements
$$
(H_l(X^{h,y}),H_r(X^{h,y}),
H_{b_1}(X^{h,y}),\ldots,H_{b_n}(X^{h,y}),
X^{h,y})
$$
under $P$
converge weakly to the distribution of
$$
(H_l(Y),H_r(Y),
H_{b_1}(Y),\ldots,H_{b_n}(Y),
Y)
$$
under $P_y$, as $h\to 0$.
\end{remark}

\section{Proof of Theorem~\protect\ref{th:main_new}}\label{sec:proofs}
The exit times considered in Theorem~\ref{th:main_new} may attain the value $\infty$ with positive probability. This is why we introduced in the text preceding Theorem~\ref{th:main_new} the metric $d$ on the nonnegative real line including $\infty$. The next result shows that it suffices to verify convergence in probability on compact time intervals.

\begin{lemma}\label{lem:finite_conv_prob}
Let $f\colon [0, \infty] \to [0,1]$ be an increasing bijection and let $d\colon [0,\infty]^2 \to [0,1]$ be the metric satisfying $d(s,t)=|f(s)-f(t)|$, $s,t\in [0,\infty]$. Let $(\Omega, \mathcal F, P)$ be a probability space, let $\xi\colon \Omega \to [0,\infty]$ be a random variable and let $\xi_n\colon \Omega \to [0,\infty]$,  $n\in \N$, be a sequence of random variables such that for all $T\in [0,\infty)$ the sequence of $[0,T]$-valued random variables
$(\xi_n\wedge T)_{n\in \N}$ converges to $\xi \wedge T$ in probability (with respect to the metric induced by the absolute value). Then the sequence $(\xi_n)_{n\in \N}$ converges to $\xi$ in probability with respect to the metric $d$ on $[0,\infty]$.
\end{lemma}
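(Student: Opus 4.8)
The plan is to push everything forward through the homeomorphism $f$ and thereby reduce the statement to a single elementary inequality on the compact interval $[0,1]$. Set $U_n:=f(\xi_n)$ and $U:=f(\xi)$, which are $[0,1]$-valued random variables. Since $f$ is an increasing bijection between intervals, it is automatically a homeomorphism, with $f(0)=0$, $f(\infty)=1$ and $\lim_{T\to\infty}f(T)=1$; in particular, for every $c\in[0,1)$ the point $T:=f^{-1}(c)$ lies in $[0,\infty)$ and satisfies $f(T)=c$. Because $f$ is increasing it commutes with truncation, so $f(\xi_n\wedge T)=U_n\wedge c$ and $f(\xi\wedge T)=U\wedge c$. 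Moreover $d(\xi_n,\xi)=|f(\xi_n)-f(\xi)|=|U_n-U|$, so convergence in probability with respect to $d$ is, by definition, exactly convergence of $U_n$ to $U$ in probability for the absolute value; the whole claim reduces to the latter.

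First I would transfer the hypothesis to the $U$-side. Fix $c\in[0,1)$ and put $T=f^{-1}(c)$. As $f$ restricted to the compact interval $[0,T]$ is uniformly continuous, it preserves convergence in probability; hence from $\xi_n\wedge T\to\xi\wedge T$ in probability I obtain
\begin{equation*}
U_n\wedge c=f(\xi_n\wedge T)\longrightarrow f(\xi\wedge T)=U\wedge c\quad\text{in probability, for every }c\in[0,1).
\end{equation*}

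The crux is then a deterministic inequality that removes the truncation at the cost of an error which is uniformly small near the top of $[0,1]$. Writing $x=(x\wedge c)+(x-c)^+$ for $x\in[0,1]$ and noting that $(x-c)^+\in[0,1-c]$, I get for all $U_n,U\in[0,1]$
\begin{equation*}
|U_n-U|\le |U_n\wedge c-U\wedge c|+\bigl|(U_n-c)^+-(U-c)^+\bigr|\le |U_n\wedge c-U\wedge c|+(1-c).
\end{equation*}
Consequently, given $\varepsilon>0$, choosing $c\in[0,1)$ close enough to $1$ so that $1-c<\varepsilon/2$ yields the inclusion
\begin{equation*}
\{|U_n-U|>\varepsilon\}\subseteq\{|U_n\wedge c-U\wedge c|>\varepsilon/2\},
\end{equation*}
whence $P(|U_n-U|>\varepsilon)\le P(|U_n\wedge c-U\wedge c|>\varepsilon/2)\to0$ as $n\to\infty$ by the previous step. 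This is precisely $U_n\to U$ in probability, i.e.\ $\xi_n\to\xi$ in probability with respect to $d$, which finishes the proof.

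I expect the only genuine difficulty to be the point at infinity: because $\xi$ (and the $\xi_n$) may equal $\infty$ with positive probability, a direct truncation argument on $[0,\infty]$ fails, since $P(\xi>T)$ need not tend to $0$ as $T\to\infty$. The device that resolves this is to argue after the change of variables, where the mass near $\infty$ is compressed into the single level $1=f(\infty)$ and the tail $(x-c)^+$ beyond any threshold $c$ is bounded by $1-c$ uniformly in $x\in[0,1]$; this is exactly what the displayed inequality exploits. Everything else --- continuity of $f$, its commutation with $\wedge$, and the preservation of convergence in probability under a uniformly continuous map --- is routine.
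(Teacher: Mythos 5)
Your proof is correct, and it takes a cleaner route than the paper's. The paper works directly with $\xi_n,\xi$ and the metric $d$: for fixed $\eps$ it splits the bad event $\{d(\xi_n,\xi)>\eps\}$ into the part $A_n$ where both $f(\xi_n)$ and $f(\xi)$ lie below $1-\eps/2$ (handled via uniform continuity of $f$ on the compact $[0,f^{-1}(1-\eps/2)]$, which converts the hypothesis on $|\xi_n\wedge T-\xi\wedge T|$ into smallness of $d$) and the complementary part, where it argues that the truncated variables must already be $d$-separated by $\eps/2$, again invoking the hypothesis. You instead push everything forward through $f$ to $U_n=f(\xi_n)$, $U=f(\xi)$, observe that $d$-convergence is exactly ordinary convergence in probability of the $U$'s, transfer the hypothesis to $U_n\wedge c\to U\wedge c$ for each $c<1$ (using the same uniform-continuity-on-compacts fact), and then replace the paper's two-case event analysis by the single deterministic inequality $|U_n-U|\le|U_n\wedge c-U\wedge c|+(1-c)$, valid on $[0,1]$. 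The mathematical content is the same --- both proofs exploit that $f$ compresses the mass at $\infty$ into the level $1$ so that the tail above any threshold contributes at most $1-c$ uniformly --- but your decomposition isolates that mechanism in one explicit inequality rather than in a case split on events, which makes the argument shorter and, to my eye, more transparent. All the individual steps you use (an increasing bijection of intervals is a homeomorphism, $f$ commutes with $\wedge$, uniformly continuous maps preserve convergence in probability) are justified, so there is no gap.
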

\begin{proof}
Throughout the proof fix $\varepsilon\in (0,\infty)$. We need to show that $P(d(\xi_n,\xi)>\varepsilon)\to 0$ as $n\to \infty$. For all $n\in \N$ introduce the set
$$
A_n=\left\{f(\xi_n\vee \xi) \le 1-\frac \eps 2, d(\xi_n,\xi)>\eps\right\}.
$$
Since $f$ is continuous on $[0,\infty)$ it is uniformly continuous on $[0,f^{-1}(1-\frac \eps 2)]$, i.e., there exists $\delta\in (0,\infty)$ such that for all $s,t \in [0,f^{-1}(1-\frac \eps 2)]$
with $|s-t|\le \delta$ it holds that $d(s,t)=|f(s)-f(t)|\le \eps$.
This together with the assumption that 
$(\xi_n\wedge ( f^{-1}\left(1-\frac \eps 2\right)))_{n\in \N}$ converges to $\xi \wedge (f^{-1}\left(1-\frac \eps 2\right))$ in probability
 implies
\begin{equation}\label{eq:conv_A_n}
P(A_n)\le P\left((\xi_n\vee \xi) \le f^{-1}\left(1-\frac \eps 2\right), |\xi_n-\xi|>\delta\right)\to 0, 
\quad n\to \infty.
\end{equation}
Next note that for all $n\in \N$
\begin{equation}
\begin{split}
\left\{d(\xi_n,\xi)>\eps\right\}\setminus A_n&=\left\{f(\xi_n\vee \xi) > 1-\frac \eps 2, d(\xi_n,\xi)>\eps\right\}\\
&=\left\{f(\xi_n\vee \xi) > 1-\frac \eps 2, d(\xi_n,\xi)>\eps, f(\xi_n\wedge \xi) \le 1- \eps \right\}\\
%&\subseteq 
%\left\{f(\xi_n\vee \xi) > 1-\frac \eps 2, d(\xi_n,\xi)>\frac{\eps}{2}, f(\xi_n\wedge \xi) \le 1- \eps \right\}
%\\
&\subseteq 
\left\{ d\left (\xi_n\wedge\left(f^{-1}\left(1-\frac \eps 2\right)\right),\xi\wedge\left(f^{-1}\left(1-\frac \eps 2\right)\right)\right)\ge\frac{\eps}{2} \right\}.
\end{split}
\end{equation}
Using similar arguments as in \eqref{eq:conv_A_n} we obtain that
$P\left( \left\{d(\xi_n,\xi)>\eps\right\}\setminus A_n\right)\to 0$ as $n\to \infty$. 
Combining this with \eqref{eq:conv_A_n} we obtain that $P\left( \left\{d(\xi_n,\xi)>\eps\right\}\right)\to 0$ as $n\to \infty$. This completes the proof.
\end{proof}

We now proceed with the proof of Theorem~\ref{th:main_new}.
It follows from the results in~\cite{aku2018cointossing} that 
the discrete-time Markov chain $(X^{h,y}_{kh})_{k\in \N_0}$
can be embedded into $Y$ with a sequence of stopping times.
More precisely, Proposition~3.4 in \cite{aku2018cointossing} ensures that
for all $h\in (0,\ol h)$ and $y\in I^\circ$ there exists a sequence of stopping times $(\tau^h_k)_{k \in \N_0}$ such that
\begin{equation}\label{eq:1802a7}
\Law_{P_y}
\left(Y_{\tau^h_k}; k\in \N_0 \right)
=\Law_P
\left(X^{h,y}_{kh}; k\in \N_0\right).
\end{equation}
In what follows $(\tau^h_k)_{k\in\bbN_0}$ denotes the sequence
of stopping times from Proposition~3.4 in~\cite{aku2018cointossing}.
For every $h\in (0,\ol h)$ let $(Y^h_{kh})_{k\in \N_0}$ 
be the discrete-time process satisfying $ Y^h_{kh}=Y_{\tau^h_k}$, $k\in \N_0$.
Similarly to \eqref{eq:13112017a1}, we extend
$( Y^h_{kh})_{k\in \N_0}$ to a continuous-time
process $( Y^{h}_t)_{t\in [0,\infty)}$ by linear interpolation, i.e.,
for all $t\in[0,\infty)$, we set
\begin{equation}\label{eq:lin_int_pol}
 Y^h_t =  Y^h_{\lfloor t/h \rfloor h} + (t/h - \lfloor t/h \rfloor) ( Y^h_{(\lfloor t/h \rfloor +1)h} -  Y^h_{\lfloor t/h \rfloor h}). 
\end{equation}
Then it follows from \eqref{eq:1802a7} and \eqref{eq:13112017a1}
that
for all $h\in (0,\ol h)$ and $y\in I^\circ$
\begin{equation}\label{eq:1802a5}
\Law_{P_y}
\left( Y^h_{t}; t\in [0,\infty) \right)
=\Law_P
\left(X^{h,y}_{t}; t\in [0,\infty)\right).
\end{equation}
Therefore, in order to establish~\eqref{eq:23032019a1}
it suffices to show for all $y\in I^\circ$ that
\begin{equation}\label{eq:23032019a4}
(H_l( Y^h),H_r( Y^h),Y^h)
\xrightarrow[]{(P_y,\langle dd\rho\rangle)}
(H_l(Y),H_r(Y),Y),\quad h\to0,
\end{equation}
where the notation
$\xrightarrow[]{(P_y,\langle dd\rho\rangle)}$
stands for the convergence in probability $P_y$
under the metric $\langle dd\rho\rangle$
on $[0,\infty]^2\times C([0,\infty),\bbR)$ given by the formula
$$
\langle dd\rho\rangle\left((s,t,x),(s',t',x')\right)
=d(s,s')+d(t,t')+\rho(x,x'),
$$
where the metrics $\rho$ and $d$ are defined in
the texts preceding Condition~(A) and Theorem~\ref{th:main_new}.
Clearly, it is enough to show the convergence
of the three marginals separately.
We first notice that for all $y\in I^\circ$
$$
Y^h\xrightarrow[]{(P_y,\rho)}Y,\quad h\to0,
$$
as it is nothing else but convergence in probability $P_y$
uniformly on compact intervals,
and it is established under Condition~(A) in the proof of
Theorem~1.1 in~\cite{aku2018cointossing}.
Concerning the convergence of the remaining marginals
in~\eqref{eq:23032019a4},
by symmetry it is enough only to show that for all $y\in I^\circ$
$$
H_l( Y^h)\xrightarrow[]{(P_y,d)}H_l(Y),\quad h\to0.
$$
%with the convergence in probability
%$\xi_n\xrightarrow{P}\xi$ for $[0,\infty]$-valued
%random variables $\xi_n$ and $\xi$
%being understood as
%$P\left(\left|\frac{\xi_n}{1+|\xi_n|}-\frac{\xi}{1+|\xi|}\right|>\eps\right)\to 0$, $n\to \infty$,
%for all $\eps>0$ (cf.\ Footnote~\ref{fn:inftynorm}).
By Lemma~\ref{lem:finite_conv_prob}
(applied with $f(t)=\frac{t}{1+t}$, $t\in [0,\infty]$)
it is sufficient to prove
for any finite time horizon $T\in[0,\infty)$ and 
for all $y\in I^\circ$ that
\begin{equation}\label{eq:1802a6}
H_l( Y^h)\wedge T
\xrightarrow[]{P_y}
H_l(Y)\wedge T,\quad h\to0
\end{equation}
(in~\eqref{eq:1802a6} we use the standard Euclidean distance,
hence the simpler notation
$\xrightarrow[]{P_y}$,
as the random variables now take finite values).

We verify \eqref{eq:1802a6} in a two-step procedure: We first show 
for every $\eps\in (0,\infty)$
that $H_l(Y)\wedge T$ is dominated by 
$ (H_l(Y^h)\wedge T)+\eps$ on a set with 
probability arbitrarily close to $1$
as $h\to 0$ (Proposition~\ref{prop:bound_above}).
The reverse direction is established in Proposition~\ref{prop:bound_below} with
the help of Lemma~\ref{lem:ub_temp_diff_prob} and Lemma~\ref{lem:ub_time_lag}.
To formulate Proposition~\ref{prop:bound_above} it suffices to impose Condition~(A). Recall that Condition~(B) implies Condition~(A).

\begin{propo}\label{prop:bound_above}
Suppose that Condition (A) is satisfied. Then
 for all $y\in I^\circ$, $\eps\in (0,\infty)$ and $T\in [0,\infty)$ it holds that
 \begin{equation}\label{eq:ub_diff_ex_times2}
 P_y\left [(H_l(Y)\wedge T)\le (H_l(Y^h)\wedge T)+\eps\right]\to 1, \quad h\to 0.
\end{equation} 
\end{propo}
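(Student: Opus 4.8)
The plan is to deduce this one-sided domination directly from the already available functional convergence $Y^h\xrightarrow[]{(P_y,\rho)}Y$ (valid under Condition~(A), as recalled just before this proposition) combined with the \emph{lower} semicontinuity of the boundary hitting time. First I would record the structural facts that make $H_l$ well behaved in the direction we need. Since $I^\circ=(l,r)$, both $Y$ and the embedded interpolations $Y^h$ take values in $[l,\infty)$, so the level $l$ can only be approached from above; in particular, whenever $H_l(Y^h)<\infty$ the infimum defining $H_l(Y^h)$ is attained, i.e.\ $Y^h_{H_l(Y^h)}=l$ by path continuity, and the same holds for $Y$.

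The soft but decisive fact is that the map $x\mapsto H_l(x)$ is lower semicontinuous for local uniform convergence on the set of $[l,\infty)$-valued paths. Indeed, suppose $x^n\to x$ locally uniformly and put $t:=\liminf_n H_l(x^n)$. If $t=\infty$ there is nothing to prove; if $t<\infty$, then along a subsequence realizing the liminf one has $H_l(x^n)\to t$ and $x^n(H_l(x^n))=l$, whence $x(t)=\lim_n x^n(H_l(x^n))=l$ by local uniform convergence and continuity of $x$, so $H_l(x)\le t$. Thus $H_l(x)\le\liminf_n H_l(x^n)$, and since $\,\cdot\wedge T$ is continuous and nondecreasing, also $H_l(x)\wedge T\le\liminf_n\bigl(H_l(x^n)\wedge T\bigr)$.

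I would then transfer this to the probabilistic statement via the subsequence criterion. Write $G_h=\{H_l(Y)\wedge T\le (H_l(Y^h)\wedge T)+\eps\}$; it suffices to show $\mathbf 1_{G_h}\to 1$ in $P_y$-probability. Given an arbitrary subsequence, extract a further subsequence along which $Y^h\to Y$ holds $P_y$-a.s.\ locally uniformly, which is possible because $Y^h\xrightarrow[]{(P_y,\rho)}Y$. Along this sub-subsequence the lower semicontinuity above gives $H_l(Y)\wedge T\le\liminf\bigl(H_l(Y^h)\wedge T\bigr)$ $P_y$-a.s.; since this liminf strictly exceeds $H_l(Y)\wedge T-\eps$, we obtain $\mathbf 1_{G_h}=1$ for all large $h$, $P_y$-a.s. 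As every subsequence thus admits a further subsequence along which $\mathbf 1_{G_h}\to 1$ a.s., we conclude $P_y(G_h)\to 1$, which is precisely~\eqref{eq:ub_diff_ex_times2}.

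The only genuine content lies in isolating the lower semicontinuity of $H_l$ and recognizing that, together with the known path convergence, it already delivers this direction; the remaining steps (attainment of the hitting time from $Y^h\ge l$, and the passage from convergence in probability to a.s.\ convergence along subsequences) are routine. I would emphasize the asymmetry that explains why only one inequality is treated here: the reverse bound would require \emph{upper} semicontinuity of $H_l$, which genuinely fails for boundary hitting (a path may approach $l$ arbitrarily closely without ever touching it). This failure is exactly the discontinuity that forces the stronger Condition~(B) and the separate, more delicate estimates behind Proposition~\ref{prop:bound_below}.
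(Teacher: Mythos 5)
Your argument is correct, and it takes a genuinely different route from the paper. The paper proves this one-sided bound by introducing an interior barrier $\ol l\in I^\circ$ close to $l$: it uses that $H_{\ol l}(Y^h)\le H_l(Y^h)$, that $H_{\ol l}\wedge T$ is a $P_y$-a.s.\ continuous path functional (which rests on the oscillation/regularity properties of $Y$ at interior points discussed in Remark~\ref{rem:01042019a1}), and that $H_{\ol l}(Y)\wedge T$ approximates $H_l(Y)\wedge T$ with high probability as $\ol l\downarrow l$. You instead isolate the purely deterministic fact that $H_l$ is lower semicontinuous for local uniform convergence on continuous paths (the hitting time of the closed set $\{l\}$ is attained when finite, so a locally uniform limit point of the paths must sit at $l$ at the limiting time), and then transfer this to the probabilistic claim by the standard subsequence criterion for convergence in probability. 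Both proofs consume exactly the same input, namely $Y^h\to Y$ in $P_y$-probability locally uniformly under Condition~(A); your version buys a small but real simplification in that it needs no probabilistic regularity of $Y$ whatsoever for this direction, whereas the paper's barrier argument invokes the a.s.\ continuity of interior hitting functionals. Your closing remark correctly identifies why only this inequality is accessible here: upper semicontinuity of $H_l$ fails, which is precisely the obstruction addressed by Condition~(B) in Proposition~\ref{prop:bound_below}. One cosmetic slip: ``for all large $h$'' should read ``for all sufficiently small $h$ along the subsequence''; this does not affect the argument.
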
 
\begin{proof}
Throughout the proof fix
$y\in I^\circ$, $T\in [0,\infty)$, $\eps\in (0,\infty)$ and $\delta\in (0,\infty)$.

First choose $\ol l\in I^\circ$ such that
$$
P_y\left [A\right]>1-\frac{\delta}{2}, \quad \text{where } A=\left\{(H_l(Y)\wedge T)-(H_{\ol l}(Y)\wedge T)\le \frac{\eps}{2}\right\}.
$$
Next, note that the functional $H_{\ol l}\wedge T\colon C([0,T],I)\to [0,T]$ is $P_y$-a.s.\ continuous with respect to the sup norm on $C([0,T],I)$. Moreover, it follows from the proof of Theorem~1.1 in \cite{aku2018cointossing} (see, in particular, (40) therein) that under Condition (A) we have 
$
\|Y^h-Y\|_{C[0,T]}\xrightarrow[]{P_y}0
$
for $h\to 0$.
This implies that there exists $h_0\in (0,\ol h)$ such that for 
all $h\in (0,h_0)$ we have
$$
P_y\left [B_h\right]>1-\frac{\delta}{2}, \quad \text{where } B_h=\left\{(H_{\ol l}(Y)\wedge T)-(H_{\ol l}(Y^h)\wedge T)\le \frac{\eps}{2}\right\}.
$$
For all $h\in (0,h_0)$ we have 
$(H_{\ol l}(Y^h)\wedge T)
\le (H_l(Y^h)\wedge T)$ and hence on the event $A\cap B_h$ 
\begin{equation*}
\begin{split}
(H_l(Y)\wedge T)-(H_l(Y^h)\wedge T)&\le (H_l(Y)\wedge T)-(H_{\ol l}(Y)\wedge T)+(H_{\ol l}(Y)\wedge T)-(H_{\ol l}(Y^h)\wedge T)\\
&\le \eps.
\end{split}
\end{equation*}
Consequently, we have for all $h\in (0,h_0)$ that
$$
P_y\left [(H_l(Y)\wedge T)-(H_l(Y^h)\wedge T)\le \eps\right]\ge 
P_y\left [A\cap B_h\right]> 1-\delta.
$$
This completes the proof.
\end{proof}

Next, note that Corollary~3.3 in \cite{aku2018cointossing} shows that
for all $h\in (0,1)$ and $y\in I^\circ$ we have
\begin{align*}
E_y[H_{y-a_h(y), y + a_h(y)}(Y)]
=\frac{1}{2}\int_{(y-a_h(y),y+a_h(y))} (a_h(y)-|u-y|)\,m(du).
\end{align*}
Therefore, Condition~(B) ensures that for all
$h\in (0,\ol h)$ and $y\in I_h$ we have that
$
E_y[H_{y-a_h(y), y + a_h(y)}(Y)]\le Bh^\gamma.
$
Concerning the complement of $I_h$, the definition of $I_h$ shows that 
for all $y\in I\setminus I_h$ we have $\frac{1}{2}\int_{(y-a_h(y),y+a_h(y))} (a_h(y)-|u-y|)\,m(du)\le h$. 
Consequently, it follows
for all $h\in (0,\ol h)$ and $y\in I$ that
(recall that $\ol h\le 1$ and $B\ge1$)
\begin{equation}\label{eq:cons_cond_d}
E_y[H_{y-a_h(y), y + a_h(y)}(Y)]\le Bh^\gamma.
\end{equation}
We recall that $(\tau^h_k)_{k\in\bbN_0}$ denotes the sequence
of stopping times from Proposition~3.4 in~\cite{aku2018cointossing}
(see, in particular,~\eqref{eq:1802a7}).

\begin{lemma}\label{lem:ub_temp_diff_prob}
Suppose that Condition~(B) is satisfied. Then
 for all $y\in I^\circ$, $\eps\in (0,\infty)$ and $T\in [0,\infty)$ it holds that
 \begin{equation}\label{eq:ub_temp_diff_prob}
 P_y\left [\sup_{k\in \{1,\ldots,\lfloor T/h\rfloor\}}(k(h\wedge \alpha(h))-\tau^h_k)\le \eps\right]\to 1, \quad h\to 0.
\end{equation} 
 \end{lemma}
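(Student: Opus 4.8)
The plan is to recast the one-sided comparison as a martingale concentration estimate. Set $g_h(z):=\frac12\int_{(z-a_h(z),z+a_h(z))}(a_h(z)-|u-z|)\,m(du)=E_z[H_{z-a_h(z),z+a_h(z)}(Y)]$, the last equality being Corollary~3.3 in~\cite{aku2018cointossing}. Since, by the construction of the embedding in Proposition~3.4 in~\cite{aku2018cointossing}, the increment $\tau^h_{j+1}-\tau^h_j$ is the first exit time of $Y$, restarted at $Y_{\tau^h_j}$, from $(Y_{\tau^h_j}-a_h(Y_{\tau^h_j}),Y_{\tau^h_j}+a_h(Y_{\tau^h_j}))$, the strong Markov property gives $E_y[\tau^h_{j+1}-\tau^h_j\mid\cF_{\tau^h_j}]=g_h(Y_{\tau^h_j})$. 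Hence, writing $D_j:=(\tau^h_{j+1}-\tau^h_j)-g_h(Y_{\tau^h_j})$, the process $M^h_k:=\sum_{j=0}^{k-1}D_j$ is a martingale, and I would use the decomposition
$$
k(h\wedge\alpha(h))-\tau^h_k=\Bigl[k(h\wedge\alpha(h))-\sum_{j=0}^{k-1}g_h(Y_{\tau^h_j})\Bigr]-M^h_k=:R^h_k-M^h_k .
$$
It then suffices to prove $\sup_{k\le\lfloor T/h\rfloor}R^h_k\xrightarrow[]{P_y}0$ and $\sup_{k\le\lfloor T/h\rfloor}|M^h_k|\xrightarrow[]{P_y}0$ as $h\to0$.

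For the drift part, Condition~(B)(i) yields $g_h(z)\ge\alpha(h)\ge h\wedge\alpha(h)$ for every $z\in I_h$; dropping the nonnegative contributions of the remaining indices gives $R^h_k\le(h\wedge\alpha(h))\cdot\#\{0\le j<k:\ Y_{\tau^h_j}\notin I_h\}$. If both endpoints are inaccessible then $I_h=I^\circ$, so $R^h_k\le0$ and the drift term vanishes. If an endpoint is accessible, then $I^\circ\setminus I_h$ is a neighbourhood of that endpoint out of which the chain jumps straight onto the boundary with probability $\tfrac12$ at the following step, so the number of indices $j$ with $Y_{\tau^h_j}\notin I_h$ before absorption is stochastically dominated by a geometric random variable; being $O_{P_y}(1)$, it is killed by the prefactor $h\wedge\alpha(h)\to0$. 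The one subtle point here is to argue that the indices beyond the absorption time are controlled by the chosen version of $(\tau^h_k)_k$ from Proposition~3.4, and this should be recorded explicitly.

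The martingale part is where the assumption $\gamma>\tfrac12$ is used, and I expect it to be the main obstacle. By Doob's $L^2$ inequality and orthogonality of the increments, $E_y\bigl[\sup_{k\le N}(M^h_k)^2\bigr]\le4\sum_{j=0}^{N-1}E_y[D_j^2]$ with $N=\lfloor T/h\rfloor$, and $E_y[D_j^2]\le E_y\bigl[u_2(Y_{\tau^h_j})\bigr]$, where $u_2(z):=E_z[(H_{z-a_h(z),z+a_h(z)}(Y))^2]$. The heart of the argument is the second-moment bound for the exit time $H$ from an interval of centre $z$: from the remaining-time identity $u_2(z)=2E_z\bigl[\int_0^{H}E_{Y_s}[H]\,ds\bigr]$ and the comparison of the interval's Green function on the diagonal with its value at the centre $z$ (which yields $\sup_w E_w[H]\le2\,g_h(z)$), one gets $u_2(z)\le4\,g_h(z)^2\le4B^2h^{2\gamma}$ by~\eqref{eq:cons_cond_d}. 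Summing over the $N\approx T/h$ steps gives $E_y[(M^h_N)^2]\le4B^2T\,h^{2\gamma-1}\to0$ precisely because $\gamma>\tfrac12$, and Markov's inequality finishes the martingale estimate. Combining the two bounds in the decomposition yields~\eqref{eq:ub_temp_diff_prob}; the delicate ingredients are the Green-function comparison behind the second-moment bound and the control of the steps with $Y_{\tau^h_j}\notin I_h$.
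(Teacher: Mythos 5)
Your overall architecture (compensate $\tau^h_k$, bound the drift using Condition~(B)(i), and control the martingale part by Doob's $L^2$ inequality together with a second-moment bound for the exit times of order $g_h(z)^2\le B^2h^{2\gamma}$, so that summing $\lfloor T/h\rfloor$ terms gives $h^{2\gamma-1}\to0$) is the same as the paper's; the paper simply outsources the martingale estimate to Lemma~5.1 and Proposition~5.2 of~\cite{aku2018cointossing}, fed with exactly the bound $\sup_z E_z[H_{z-a_h(z),z+a_h(z)}(Y)]\le Bh^\gamma$ from~\eqref{eq:cons_cond_d}. Your Green-function comparison $\sup_w E_w[H]\le 2g_h(z)$ and the resulting bound $u_2(z)\le 4g_h(z)^2$ are correct and are in substance what those cited results contain.

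The gap is in the drift part, and it is not the bookkeeping detail you suggest at the end of your second paragraph. You take the compensator of $\tau^h_{j+1}-\tau^h_j$ to be $g_h(Y_{\tau^h_j})$ at every step, i.e., you assume each increment is a genuine exit time. That is false for the stopping times of Proposition~3.4 of~\cite{aku2018cointossing} precisely on $\{Y_{\tau^h_j}\notin I_h\}$: there the conditional expectation equals $h$, not $g_h(Y_{\tau^h_j})$ (which satisfies $g_h\le h$ off $I_h$ and equals $0$ at an absorbed state). This matters because once $Y$ is absorbed at $l$, \emph{every} subsequent index $j$ has $Y_{\tau^h_j}=l\notin I_h$, so your count $\#\{j<k:\,Y_{\tau^h_j}\notin I_h\}$ is of order $T/h$ rather than geometric or $O_{P_y}(1)$, and the bound $(h\wedge\alpha(h))\cdot\#\{\cdots\}$ does not vanish. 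Worse, under your reading of the embedding (pure exit times, hence $\tau^h_{j+1}=\tau^h_j$ after absorption) the statement of the lemma itself would be false, since $k(h\wedge\alpha(h))$ keeps growing while $\tau^h_k$ freezes. The repair is to use the actual structure of Proposition~3.4: the conditional expected increment is $\ge\alpha(h)$ on $\{Y_{\tau^h_j}\in I_h\}$ and exactly $h$ off $I_h$, hence always $\ge h\wedge\alpha(h)$; then $k(h\wedge\alpha(h))\le\sum_{n=1}^k E_y[\tau^h_n-\tau^h_{n-1}\mid\cF_{\tau^h_{n-1}}]$ holds termwise, the drift term is nonpositive, and no counting argument is needed. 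This is exactly what the paper does.
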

 \begin{proof}
 Throughout the proof fix $y\in I^\circ$ and $T\in [0,\infty)$.
For all $k\in \N_0$ we define $\mathcal G_k=\mathcal F_{\tau^h_k}$.
By Proposition~3.4 in \cite{aku2018cointossing} and Condition~(B) we have for all $n\in \N$ and $h\in (0,\ol h)$ on the event $\{Y_{\tau^h_{n-1}} \in I_h\}$ 
 $$
E_y[\tau^h_n-\tau^h_{n-1}|\mathcal G_{n-1}]=
\frac{1}{2}\int_{(Y_{\tau^h_{n-1}}-a_h(Y_{\tau^h_{n-1}}),Y_{\tau^h_{n-1}}+a_h(Y_{\tau^h_{n-1}}))} (a_h(Y_{\tau^h_{n-1}})-|u-Y_{\tau^h_{n-1}}|)\,m(du)
\ge \alpha(h)
.
$$
Again by Proposition~3.4 in \cite{aku2018cointossing} on the event $\{Y_{\tau^h_{n-1}} \notin I_h\}$ we have $E_y[\tau^h_n-\tau^h_{n-1}|\mathcal G_{n-1}]= h$.
This implies for all $h\in (0,\ol h)$ and $k\in \N$ that
$$
k(h\wedge \alpha(h))=\sum_{n=1}^k(h\wedge \alpha(h))\le \sum_{n=1}^k E_y[\tau^h_n-\tau^h_{n-1}|\mathcal G_{n-1}]
$$
and hence
\begin{equation}\label{eq:ub_temp_diff}
\sup_{k\in \{1,\ldots, \lfloor T/h\rfloor\}}(k(h\wedge \alpha(h))-\tau^h_k)\le 
\sup_{k\in \{1,\ldots, \lfloor T/h\rfloor\}}\left (\left(\sum_{n=1}^k E_y[\tau^h_n-\tau^h_{n-1}|\mathcal G_{n-1}]\right)-\tau^h_k\right).
\end{equation}
Next, \eqref{eq:cons_cond_d} ensures that
\begin{equation*}
\begin{split}
\frac{\sup_{z\in I}\left( E_z[H_{z-a_h(z), z + a_h(z)}(Y)]\right)}{\sqrt{h}}
\le 
Bh^{\gamma-\frac{1}{2}}
\to0,\quad h\to0.
\end{split}
\end{equation*}
Therefore, Proposition~5.2 in \cite{aku2018cointossing}
ensures that
$$
E_y\left[\sup_{k\in \{1,\ldots, \lfloor T/h\rfloor \}}\left|\tau^h_k-\sum_{n=1}^k E[\tau^h_n-\tau^h_{n-1}|\mathcal G_{n-1}]\right|\right]\to 0, \quad h\to 0.
$$
Combining this with \eqref{eq:ub_temp_diff} proves \eqref{eq:ub_temp_diff_prob} and completes the proof.
 \end{proof}

\begin{lemma}\label{lem:ub_time_lag}
Suppose that Condition~(B) is satisfied. Then
 for all $y\in I^\circ$, $\eps\in (0,\infty)$ and $T\in [0,\infty)$ it holds that
 \begin{equation}\label{eq:ub_time_lag}
 P_y\left [\sup_{k\in \{1,\ldots,\lfloor T/h\rfloor\}}(\tau^h_k-\tau^h_{k-1})\le \eps\right]\to 1, \quad h\to 0.
\end{equation} 
 \end{lemma}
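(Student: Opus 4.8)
The plan is to reduce \eqref{eq:ub_time_lag} to a union bound over the $\lfloor T/h\rfloor$ individual time lags $\tau^h_k-\tau^h_{k-1}$, and the crux will be to establish a sufficiently strong (exponential) tail estimate for each increment. First I would record the conditional structure of the increments: by the embedding \eqref{eq:1802a7} and the strong Markov property of $Y$ (Proposition~3.4 in~\cite{aku2018cointossing}), conditionally on $\mathcal G_{k-1}=\mathcal F_{\tau^h_{k-1}}$ the increment $\tau^h_k-\tau^h_{k-1}$ is distributed as the exit time of $Y$ from the interval $J_{k-1}=(Y_{\tau^h_{k-1}}-a_h(Y_{\tau^h_{k-1}}),Y_{\tau^h_{k-1}}+a_h(Y_{\tau^h_{k-1}}))$ started at $Y_{\tau^h_{k-1}}$, which for $Y_{\tau^h_{k-1}}\in I_h$ is the center of $J_{k-1}$. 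By~\eqref{eq:cons_cond_d} the conditional mean of this exit time is at most $Bh^\gamma$ for every $k$, uniformly.

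The key step is the tail bound. A naive application of the conditional Markov inequality gives $P_y(\tau^h_k-\tau^h_{k-1}>\eps\mid\mathcal G_{k-1})\le Bh^\gamma/\eps$, so summing over $k\le\lfloor T/h\rfloor$ yields a bound of order $h^{\gamma-1}$, which does \emph{not} vanish as $h\to0$ and even blows up for $\gamma<1$; a first-moment union bound is therefore hopeless. To remedy this I would invoke the classical iteration for interval exit times: writing $\mu:=\sup_{w\in J}E_w[H_J(Y)]$ for the worst-case mean exit time from an interval $J$, Markov's inequality gives $P_w(H_J>2\mu)\le\tfrac12$ for every $w\in J$, and applying the strong Markov property repeatedly at the times $2\mu,4\mu,\ldots$ yields $P_w(H_J>2n\mu)\le 2^{-n}$, an exponential tail. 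A short Green's-function comparison controls $\mu$ for the symmetric interval $J_{k-1}$: since the diagonal Green's function $G(u,u)$ is dominated by $2G(c,u)$ on an interval of center $c$, one gets $\mu\le 2E_{c}[H_{J_{k-1}}(Y)]\le 2Bh^\gamma$ by~\eqref{eq:cons_cond_d}. Consequently, for every $k$,
\begin{equation*}
P_y(\tau^h_k-\tau^h_{k-1}>\eps\mid\mathcal G_{k-1})\le 2^{-\lfloor \eps/(4Bh^\gamma)\rfloor}.
\end{equation*}

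Finally I would assemble the pieces. Using the tower property and the uniform conditional tail estimate,
\begin{equation*}
P_y\left(\sup_{k\in\{1,\ldots,\lfloor T/h\rfloor\}}(\tau^h_k-\tau^h_{k-1})>\eps\right)\le\sum_{k=1}^{\lfloor T/h\rfloor}E_y\left[P_y(\tau^h_k-\tau^h_{k-1}>\eps\mid\mathcal G_{k-1})\right]\le\frac{T}{h}\,2^{-\lfloor \eps/(4Bh^\gamma)\rfloor}.
\end{equation*}
Since $\gamma>0$, the right-hand side has the form $h^{-1}\exp(-c\,h^{-\gamma})$ with $c=\eps\ln2/(4B)>0$, and the exponential factor dominates the polynomial one, so the bound tends to $0$ as $h\to0$, which is~\eqref{eq:ub_time_lag}. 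The main obstacle is exactly the failure of the first-moment union bound; overcoming it forces the exponential tail and hence the uniform-over-starting-points mean estimate supplied by the Green's-function comparison. The asymmetric intervals arising near an accessible boundary (i.e.\ on the complement of $I_h$) are handled by the same comparison, or more simply by noting that absorption at a nearby boundary only shortens the exit and thus preserves the exponential tail.
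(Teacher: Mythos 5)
Your argument is correct in its essentials, but it takes a genuinely different route from the paper. The paper also reduces to a union bound over the $\lfloor T/h\rfloor$ increments, but instead of an exponential tail it uses a \emph{second-moment} estimate: by Chebyshev's inequality, $P_y[\sup_k(\tau^h_k-\tau^h_{k-1})>\eps]\le\eps^{-2}\sum_{k=1}^{N}E_y[(\tau^h_k-\tau^h_{k-1})^2]$, and then Lemma~5.1 of~\cite{aku2018cointossing} bounds each second moment by $\bigl(2h+2^{3/2}\sup_z E_z[H_{z-a_h(z),z+a_h(z)}(Y)]\bigr)^2\le(2h+2^{3/2}Bh^\gamma)^2$, giving an overall bound of order $Th^{2\gamma-1}$. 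This is exactly where the hypothesis $\gamma>\frac12$ in Condition~(B) is consumed. Your iterated-Markov argument, $P_w(H_J>2n\mu)\le 2^{-n}$ with $\mu=\sup_{w\in J}E_w[H_J]\le 2E_c[H_J]\le 2Bh^\gamma$ via the Green's-function comparison $G(x,u)\le G(u,u)\le 2G(c,u)$, yields $h^{-1}2^{-\lfloor\eps/(4Bh^\gamma)\rfloor}\to0$ for \emph{any} $\gamma>0$, so it is both self-contained (it does not need the imported second-moment lemma) and strictly stronger in the exponent; your diagnosis that a first-moment union bound cannot work is also exactly right.

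One detail needs patching. You identify $\tau^h_k-\tau^h_{k-1}$, conditionally on $\cG_{k-1}$, with the exit time of $Y$ from the symmetric interval $J_{k-1}$. This is accurate on the event $\{Y_{\tau^h_{k-1}}\in I_h\}$, but on the complement the construction of Proposition~3.4 in~\cite{aku2018cointossing} forces $E_y[\tau^h_k-\tau^h_{k-1}\,|\,\cG_{k-1}]=h$ exactly, whereas the mean exit time from $J_{k-1}$ is only $\le h$ there; so the increment is the exit time \emph{plus} an additional (conditionally deterministic) delay, not the exit time itself. Your closing remark about ``asymmetric intervals'' misdiagnoses this (the intervals $(y-a_h(y),y+a_h(y))$ are always symmetric about the current state; the issue is the extra waiting time, not the geometry). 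The repair is immediate: the added delay is at most $h<\eps/2$ for small $h$, so the increment is stochastically dominated by $h$ plus the exit time and the exponential tail survives with $\eps$ replaced by $\eps/2$. With that adjustment your proof is complete.
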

 
 \begin{proof}
 Throughout the proof fix $y\in I^\circ$, $T\in [0,\infty)$, $\eps\in (0,\infty)$, $h\in (0,\ol h)$ and let $N=\lfloor T/h\rfloor$. By Markov's inequality it holds that
 $$
  P_y\left [\sup_{k\in \{1,\ldots,N\}}(\tau^h_k-\tau^h_{k-1})> \eps\right]
  \le
\frac{1}{\eps^2}
E_y\left[
\sup_{k\in \{1,\ldots,N\}}(\tau^h_k-\tau^h_{k-1})^2
\right]
  \le 
  \frac{1}{\eps^2} \sum_{k=1}^NE_y\left [(\tau^h_k-\tau^h_{k-1})^2\right].
  $$
  It follows from Lemma~5.1 in \cite{aku2018cointossing} that
  $$
  P_y\left [\sup_{k\in \{1,\ldots,N\}}(\tau^h_k-\tau^h_{k-1})> \eps\right]
  \le 
  \frac{1}{\eps^2} \sum_{k=1}^N
  \left[
  2h+
2^{3/2} \sup_{z\in I}\left( E_z[H_{z-a_h(z), z + a_h(z)}(Y)]\right)
\right]^2.
  $$
  Then \eqref{eq:cons_cond_d} shows that
  \begin{equation}
  \begin{split}
  P_y\left [\sup_{k\in \{1,\ldots,N\}}(\tau^h_k-\tau^h_{k-1})> \eps\right]
  &\le 
  \frac{N}{\eps^2}
  \left(
  2h+
2^{3/2}B h^\gamma
\right)^2=
 \frac{Nh^{2\gamma}}{\eps^2}
  \left(
  2h^{1-\gamma}+
2^{3/2}B
\right)^2\\
&
\le 
\frac{Th^{2\gamma-1}}{\eps^2}
  \left(
  2h^{1-\gamma}+
2^{3/2}B
\right)^2
.
  \end{split}
  \end{equation}
  Letting $h$ go to $0$ completes the proof.
 \end{proof}

\begin{propo}\label{prop:bound_below}
Suppose that Condition~(B) is satisfied. Then
 for all $y\in I^\circ$, $\eps\in (0,\infty)$ and $T\in [0,\infty)$ it holds that
 \begin{equation}\label{eq:ub_diff_ex_times}
 P_y\left [(H_l(Y^h)\wedge T)\le (H_l(Y)\wedge T)+\eps\right]\to 1, \quad h\to 0.
\end{equation} 
\end{propo}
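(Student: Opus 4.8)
The plan is to exploit the embedding $Y^h_{kh}=Y_{\tau^h_k}$ together with the two preceding lemmas, and first reduce to the event $\{H_l(Y)<T\}$: on its complement $\{H_l(Y)\ge T\}$ one has $(H_l(Y^h)\wedge T)\le T\le (H_l(Y)\wedge T)+\eps$ trivially, so it remains to show $H_l(Y^h)\le H_l(Y)+\eps$ with probability tending to one on $\{H_l(Y)<T\}$. On this event $l$ is necessarily a finite absorbing boundary, the interpolated path $Y^h$ stays $\ge l$ (since $Y\ge l$), and because $y\in I^\circ$ it first equals $l$ exactly at the grid time $K_h\,h$, where $K_h:=\inf\{k\in\bbN_0:Y^h_{kh}=l\}$. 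Moreover, as $Y$ is absorbed at $l$, the equality $Y_{\tau^h_k}=l$ holds iff $\tau^h_k\ge H_l(Y)$, so that $K_h=\inf\{k:\tau^h_k\ge H_l(Y)\}$ and in particular $\tau^h_{K_h-1}<H_l(Y)\le\tau^h_{K_h}$. Thus $H_l(Y^h)=K_h\,h$, and the task is to bound $K_h\,h$ from above by $H_l(Y)+\eps$.

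I would run the argument on the intersection $G_1\cap G_2$ of the high-probability events furnished by Lemma~\ref{lem:ub_temp_diff_prob} and Lemma~\ref{lem:ub_time_lag}, both applied with the enlarged horizon $\tilde T:=T+1$ and a small parameter $\eps'$ fixed at the end; on $G_1$ one has $k(h\wedge\alpha(h))\le\tau^h_k+\eps'$ and on $G_2$ one has $\tau^h_k-\tau^h_{k-1}\le\eps'$ for all $k\in\{1,\dots,\lfloor\tilde T/h\rfloor\}$. A preliminary step is to verify that $K_h\le\lfloor\tilde T/h\rfloor$ for all small $h$, so that these bounds are available at the index $k=K_h$. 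This I would establish by contradiction: if $K_h>\lfloor\tilde T/h\rfloor=:M$, then $Y_{\tau^h_M}\ne l$, whence $\tau^h_M<H_l(Y)<T$, while $G_1$ gives $M(h\wedge\alpha(h))\le\tau^h_M+\eps'<T+\eps'$; since $M(h\wedge\alpha(h))\to\tilde T$ as $h\to0$ (because $(h\wedge\alpha(h))/h\to1$ by Condition~(B)(i)), this is impossible for small $h$ once $\tilde T>T+\eps'$.

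With the index bound in hand, I would chain the estimates at $k=K_h$. The time-lag bound on $G_2$ gives $\tau^h_{K_h}\le\tau^h_{K_h-1}+\eps'<H_l(Y)+\eps'$, and $G_1$ gives $K_h(h\wedge\alpha(h))\le\tau^h_{K_h}+\eps'<H_l(Y)+2\eps'$. Dividing by $(h\wedge\alpha(h))/h$ and using $H_l(Y)<T$ then yields $H_l(Y^h)=K_h\,h\le\frac{h}{h\wedge\alpha(h)}\,(H_l(Y)+2\eps')$, which can be forced to be $\le H_l(Y)+\eps$ by first choosing $\eps'$ (e.g.\ $\eps'=\eps/4$) and then taking $h$ small enough that $\frac{h}{h\wedge\alpha(h)}$ is sufficiently close to $1$. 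Combining the two cases shows $\{H_l(Y^h)\wedge T\le(H_l(Y)\wedge T)+\eps\}\supseteq G_1\cap G_2$ for all small $h$, and since $P_y(G_1\cap G_2)\to1$ the claim~\eqref{eq:ub_diff_ex_times} follows.

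I expect the main obstacle to be the bookkeeping around the random index $K_h$: the two lemmas only control the increments and partial sums up to the deterministic index $\lfloor\tilde T/h\rfloor$, so one must ensure a priori that $K_h$ lies in this range before the embedding estimates can be invoked at $k=K_h$. The conceptual crux that makes everything go through is the identity $K_h=\inf\{k:\tau^h_k\ge H_l(Y)\}$, coming from the absorption of $Y$ at $l$; it converts the time-lag bound $\tau^h_{K_h}-\tau^h_{K_h-1}\le\eps'$ into the two-sided pinching $H_l(Y)\le\tau^h_{K_h}\le H_l(Y)+\eps'$, after which Lemma~\ref{lem:ub_temp_diff_prob} transfers the control from the embedding times $\tau^h_{K_h}$ back to the grid time $K_h\,h$.
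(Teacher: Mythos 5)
Your proposal is correct and follows essentially the same route as the paper's proof: both rest on the embedding $Y^h_{kh}=Y_{\tau^h_k}$, invoke Lemma~\ref{lem:ub_temp_diff_prob} and Lemma~\ref{lem:ub_time_lag} on an enlarged horizon, identify $H_l(Y^h)$ with the grid time $kh$ at which the embedded chain first sits at $l$, and chain $kh\lesssim k(h\wedge\alpha(h))\le\tau^h_k+\cdot\le\tau^h_{k-1}+\cdot<H_l(Y)+\cdot$. The only difference is organizational: you control the random index $K_h$ via an a priori range bound proved by contradiction, whereas the paper partitions on which interval $(\tau^h_{k-1},\tau^h_k]$ contains $H_l(Y)$ and treats the tail case separately.
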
 
 \begin{proof}
 Throughout the proof fix $y\in I^\circ$, $\eps\in (0,\infty)$ and $T\in [0,\infty)$. For all $h\in (0,\ol h)$ we introduce the events
 $$
 A_h=\left\{\sup_{k\in \{1,\ldots,\lfloor (T+\eps)/h\rfloor\}}(k(h\wedge \alpha(h))-\tau^h_k)\le \frac{\eps}{3}\right\}\quad \text{and} \quad
 B_h=\left\{ \sup_{k\in \{1,\ldots,\lfloor (T+\eps)/h\rfloor\}}(\tau^h_k-\tau^h_{k-1})\le \frac{\eps}{3}\right\}.
  $$
  We show that for all $h\in (0,\ol h)$ small enough it holds that
  $\{(H_l(Y^h)\wedge T)-(H_l(Y)\wedge T)\le \eps\}\supseteq A_h\cap B_h$. Then Lemma~\ref{lem:ub_temp_diff_prob} and Lemma~\ref{lem:ub_time_lag} imply \eqref{eq:ub_diff_ex_times}.

First notice that for all $h\in (0,\ol h)$ on the event
$A_h\cap B_h \cap \{H_l(Y)\in(T,\infty]\}$
we have
\begin{equation}\label{eq:diff_exit_times_aux2}
(H_l(Y^h)\wedge T)-(H_l(Y)\wedge T)\le T-T=0.
\end{equation}
Next, there exists $h_0\in (0,\ol h)$ such that for all $h\in (0,h_0)$ it holds that
\begin{equation}\label{eq:lb_alphah}
\frac{\alpha(h)}{h}\ge 1-\frac{\eps}{3(T+\eps)}.
\end{equation}
This implies that for all $h\in (0, h_0)$ and $k\in \{1,\ldots,\lfloor (T+\eps)/h\rfloor\}$ we have
$$
kh=k(h\wedge \alpha(h))+kh\left(1-\left(1\wedge \frac{\alpha(h)}{h}\right)\right)
\le k(h\wedge \alpha(h))+\frac{kh\eps}{3(T+\eps)}
\le k(h\wedge \alpha(h))+\frac{\eps}{3}.
$$
Note that for all $h\in (0, h_0)$ and $k\in \N$ on the event $\{H_l(Y)\in (\tau^h_{k-1},\tau^h_k]\cap[0,T]\}$
it holds that $Y_{\tau^h_{k-1}}>l$ and $Y_{\tau^h_{k}}=l$ and, therefore, it follows from the construction of $Y^h$ (cf.\ \eqref{eq:lin_int_pol}) that $H_l(Y^h)=kh$.
Consequently, for all $h\in (0, h_0)$ and $k\in \{1,\ldots,\lfloor (T+\eps)/h\rfloor\}$ 
  on the event $A_h\cap B_h \cap \{H_l(Y)\in (\tau^h_{k-1},\tau^h_k]\cap[0,T]\}$ we have
  \begin{equation}\label{eq:diff_exit_times_aux1}
  \begin{split}
  (H_l(Y^h)\wedge T)-(H_l(Y)\wedge T )&=
 ((kh)\wedge T)-H_l(Y)\le
 kh-\tau^h_{k-1}\\
 &\le k(h\wedge \alpha(h))+\frac{\eps}{3}-\tau^h_{k-1}
 \le \tau^h_k+\frac{2\eps}{3}-\tau^h_{k-1}\le \eps.
 \end{split}
  \end{equation}
Finally, for all $h\in (0,h_0\wedge( \eps/3))$
on the event $A_h\cap B_h \cap \{H_l(Y)\in (\tau^h_{\lfloor (T+\eps)/h\rfloor},\infty]\}$ we have $Y_{\tau^h_{\lfloor (T+\eps)/h\rfloor}}>l$ and
again by the construction of $Y^h$ (cf.\ \eqref{eq:lin_int_pol}) that
  $$
  H_l(Y^h)\ge \left(\left \lfloor \frac{T+\eps}{h}\right \rfloor+1\right)h
  \ge \left( \frac{T+\eps}{h}\right )h>T.
  $$ 
 Therefore, by \eqref{eq:lb_alphah} we have for all $h\in (0,h_0\wedge( \eps/3))$
on the event $A_h\cap B_h \cap \{H_l(Y)\in (\tau^h_{\lfloor (T+\eps)/h\rfloor},\infty]\}$
  \begin{equation*}
  \begin{split}
   H_l(Y)>\tau^h_{\lfloor (T+\eps)/h\rfloor}
  &\ge \left \lfloor \frac{T+\eps}{h}\right \rfloor (h\wedge \alpha(h))-\frac{\eps}{3}
  \ge \left( \frac{T+\eps}{h}-1\right)h\left(1\wedge \frac{\alpha(h)}{h}\right)-\frac{\eps}{3}\\
   &\ge \left( \frac{T+\eps}{h}-1\right)h\left(1-\frac{\eps}{3(T+\eps)}\right)-\frac{\eps}{3}
 \ge T+\eps-h-\frac{\eps}{3}-\frac{\eps}{3}
  >T
  \end{split}
  \end{equation*}
  and, consequently,
$(H_l(Y^h)\wedge T)-(H_l(Y)\wedge T )=0$.
Combining this with 
\eqref{eq:diff_exit_times_aux2}
and~\eqref{eq:diff_exit_times_aux1}
proves that for all $h\in (0,h_0\wedge( \eps/3))$ it holds that
  $\{(H_l(Y^h)\wedge T)-(H_l(Y)\wedge T)\le \eps\}\supseteq A_h\cap B_h$.
  This completes the proof.
 \end{proof}

Combining Proposition~\ref{prop:bound_above} and Proposition~\ref{prop:bound_below} 
shows that for all $y\in I^\circ$, $\eps\in (0,\infty)$ and $T\in [0,\infty)$ we have
$$
P_y\left [\left|(H_l(Y^h)\wedge T)-(H_l(Y)\wedge T)\right|\le \eps\right]\to 1, \quad h\to 0,
$$
which is~\eqref{eq:1802a6}. The proof of Theorem~\ref{th:main_new} is thus completed.

\begin{remark}
It is worth noting that, under Condition~(D),
we can apply Corollary~5.4 of~\cite{aku2018cointossing}
and conclude that
for all $T\in(0,\infty)$ and $y\in I^\circ$
it holds
$$
\sup_{k\in\{1,\ldots,\lfloor T/h\rfloor\}}
|\tau^h_k-kh|
\xrightarrow[]{P_y}0,\quad h\to0.
$$
This allows to simplify the above argumentation
and results in an easier proof of the claim of
Theorem~\ref{th:main_new} under Condition~(D).
The latter, however,
does not provide an answer to the open question
about the convergence of the exit times of the (weak)
Euler approximations for the CEV diffusion.
To answer that question,
we need the full strength of Theorem~\ref{th:main_new}
under Condition~(B) (see Section~\ref{sec:CEV}).
\end{remark}

\section{Condition (A) does not suffice}\label{sec:A_not_suff}
For simplicity we assume that $r=\infty$ throughout this section.
Let $l>-\infty$ be an accessible boundary point of $Y$. Let $(l_h)_{h\in (0, \ol h)}\subset (l,\infty)$ be the numbers defined at the beginning of Section~\ref{sec:main_res}. It holds that $l_h\searrow l$ as $h\searrow 0$. Let $(\ol l_h)_{h\in (0,\ol h)}\subset (l,\infty)$ be another family satisfying 
$\ol l_h>l_h$ for all $h\in (0,\ol h)$ and $\ol l_h\searrow l$ as $h\searrow 0$. Then define for all $h\in (0, \ol h)$ a scale factor by $a_h(y)=\wh a_h(y)$  for all $y\ge \ol l_h$ and by $a_h(y)=0$ for all $y\in [l,\ol l_h)$, where $\wh a_h$ is the EMCEL($h$) scale factor defined in Example~\ref{ex:25022019a1}.
Notice that $I_h=I^\circ=(l,\infty)$
for the scheme with scale factors $(a_h)_{h\in(0,\ol h)}$
due to the second term
on the right-hand side of~\eqref{eq:22022019a2},
hence Condition~(B) is not satisfied
(the integral in~\eqref{eq:01042019a1}
is zero whenever $y\in I^\circ$ is close to~$l$).

By \eqref{eq:22022019a3} it holds for every $h\in (0,\ol h)$ and $y\ge \ol l_h$ that
$$
\frac{1}{2}\int_{(y-a_h(y),y+a_h(y))} (a_h(y)-|z-y|)\,m(dz) = \frac{1}{2}\int_{(y-\wh a_h(y),y+\wh a_h(y))} (\wh a_h(y)-|z-y|)\,m(dz) = h.
$$
For every compact subset $K$ of $(l,\infty)$ there exists $h_0\in (0,\ol h)$ such that 
$K \subset (\ol l_h,\infty)$ for all $h\in (0,h_0)$. This implies for all $h\in (0,h_0)$ that
$$
\sup_{y\in K }  \left|
\frac{1}{2}\int_{(y-a_h(y),y+a_h(y))} (a_h(y)-|u-y|)\,m(du)
-h
\right|
=0
$$
and hence Condition~(A) is satisfied for this choice of scale factors.
Theorem~\ref{thm_main_coin} applies
and we thus have the weak convergence of $X^h$ to $Y$
as $h\to0$.

Next, let $h\in (0,\ol h)$ and $y\ge \ol l_h$. This implies that $y>l_h$ and hence there exists
$\ol a \in (0,\infty)$ such that $y>l+\ol a$ and 
$$
\frac{1}{2}\int_{(l,l+2\ol a)} (\ol a-|u-(l+\ol a)|)\,m(du)\ge h.
$$
This implies that
\begin{equation}\label{eq:060319a1}
\frac{1}{2}\int_{(y-a_h(y),y+a_h(y))} (a_h(y)-|u-y|)\,m(du)=h
\le \frac{1}{2}\int_{(l,l+2\ol a)} (\ol a-|u-(l+\ol a)|)\,m(du).
\end{equation}
Now suppose that 
$y-a_h(y)=l$. Then we have for all $u\in (l+\ol a,y)$ that $\ol a-|u-(l+\ol a)|<a_h(y)-|u-y|$
and for all $u\in (l,l+\ol a)$ that $\ol a-|u-(l+\ol a)|=a_h(y)-|u-y|$. Hence it holds that
$$\frac{1}{2}\int_{(l,l+2\ol a)} (\ol a-|u-(l+\ol a)|)\,m(du)<\frac{1}{2}\int_{(y-a_h(y),y+a_h(y))} (a_h(y)-|u-y|)\,m(du),$$
which contradicts \eqref{eq:060319a1}. Hence, it must hold that $y-a_h(y)>l$.
To summarize, we have for all $y\in [\ol l_h,\infty)$ that $y-a_h(y)\in (l, y]$ and for all $y\in (l,\ol l_h)$ that $y\pm a_h(y)=y$. 
This implies that the discrete-time process $(X^h_{kh})_{k \in \bbN_0}$ (cf.\ \eqref{eq:def_X})
does not jump from the region $[\ol l_h,\infty)$ to the boundary point $l$. Moreover, once it enters the interval $(l,\ol l_h)$ it stays constant. Therefore, the process $(X^h_{kh})_{k \in \bbN_0}$ never hits the boundary point $l$
and it holds $H_l(X^h)=\infty$ for all $h\in (0,\ol h)$. In particular, we do not have weak convergence of $H_l(X^h)$ to $H_l(Y)$ and thus the claim of Theorem~\ref{th:main_new} does not hold true.

To sum up, for \emph{every} general diffusion $Y$
with state space $I=[l,\infty)$, $l>-\infty$ being an accessible boundary point,
we constructed a family of scale factors such that Condition~(A) is satisfied
but the claim of Theorem~\ref{th:main_new} does not hold true.
In terms of the boundary classification, an accessible boundary can be either \emph{regular} or \emph{exit}
(see, e.g., Definition~16.48 in~\cite{Breiman:92}).
It is worth noting that the above construction works irrespectively of whether $l$ is an exit boundary of $Y$ or a regular one.

\section{Applications}\label{sec:CEV}
\subsection{CEV diffusion}
We start with the CEV diffusion.
Let $p\in(-\infty,1)$.
Consider the process $Y$ driven by the SDE
\begin{equation}\label{eq:01042019a2}
dY_t=Y_t^p\,dW_t
\end{equation}
inside $I^\circ=(0,\infty)$
and stopped as soon as it reaches $0$.
Notice that the boundary point $0$
is accessible if and only if $p\in(-\infty,1)$
(a straightforward application of Feller's test for explosions;
see, e.g., Theorem~5.5.29 in~\cite{KS}),
hence such a restriction on the parameter~$p$.
The state space of $Y$ is thus $I=[0,\infty)$.
In other words, $Y$ is a diffusion in natural scale
with state space $I=[0,\infty)$
and speed measure
$m(dx)=\frac2{x^{2p}}\,dx$ on $I^\circ$.

In this subsection
we consider the question of approximating
the law of $H_0(Y)$.
As discussed in the introduction
this is a challenging problem
already in the case $p\in[\frac12,1)$
(see the text after~\eqref{eq:22032019a4} and,
for more detail, see~\cite{ChiganskyKlebaner:12}).
Of course, one possibility is to use the EMCEL scheme,
which works for every general diffusion,
let alone for~\eqref{eq:01042019a2}.
Below we discuss the weak Euler scheme
for~\eqref{eq:01042019a2}.

%As a common means of choice for the approximation of $Y$
More precisely, we consider the weak Euler scheme
slightly modified near zero to exclude
the possibility of jumping out of the state space
(recall that the scale factors we consider
should satisfy $y\pm a_h(y)\in I$
whenever $y\in I^\circ$ and $h\in(0,\ol h)$).
Namely, let $\ol h=1$ and for all $h\in (0,1)$ let
\begin{equation}\label{eq:04042019a1}
a^{Eu}_h(y)=\min\{\sqrt h y^p,y\}=\begin{cases}
\sqrt h y^p&\text{if }y\in (h^{\frac{1}{2(1-p)}},\infty),\\
y&\text{if }y\in [0,h^{\frac{1}{2(1-p)}}].
\end{cases}
\end{equation}
%Note that with this choice we have $y-a^{Eu}_h(y)>0$ for all $y\in (h^{\frac{1}{2(1-p)}},\infty)$ and $y-a^{Eu}_h(y)=0$ for all $y\in [0,h^{\frac{1}{2(1-p)}}]$.
For all $y\in I^\circ$ and $h\in (0,1)$ we denote by $X^{Eu,h,y}=(X^{Eu,h,y}_t)_{t\in [0,\infty)}$ the corresponding weak Euler scheme started in $y$ with step size $h$
defined via \eqref{eq:def_X} and~\eqref{eq:13112017a1}.

The main part of this subsection is devoted to the verification that
in the case $p\in(-\infty,1)\setminus(-\frac12,0)$
for the weak Euler scheme~\eqref{eq:04042019a1}
Condition~(B) is satisfied
(the remaining case $p\in(-\frac12,0)$
will be discussed thereafter).
To this end we need to distinguish the cases $p=\frac{1}{2}$
(Lemma~\ref{lem:cev_05})
and $p\in(-\infty,1)\setminus\left((-\frac12,0)\cup\{\frac12\}\right)$
(Lemma~\ref{lem:cev_neq_05}). 
Combining these results with
Theorem~\ref{th:main_new} leads
to the following corollary.

\begin{corollary}\label{cor:main_cev}
Let $p\in(-\infty,1)\setminus(-\frac12,0)$.
Then for any $y\in (0,\infty)$
the distributions of the random elements
$(H_0(X^{Eu,h,y}),X^{Eu,h,y})$
under $P$ converge weakly to the distribution of
$(H_0(Y),Y)$ under $P_y$, as $h\to 0$.
\end{corollary}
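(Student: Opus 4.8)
The plan is to invoke Theorem~\ref{th:main_new} directly, so the entire task reduces to verifying that the modified weak Euler scheme with scale factors $a^{Eu}_h$ in~\eqref{eq:04042019a1} satisfies Condition~(B) for the speed measure $m(dx)=\frac{2}{x^{2p}}\,dx$ on $(0,\infty)$. Once Condition~(B) is established, Theorem~\ref{th:main_new} yields weak convergence of the full triple $(H_0(X^{Eu,h,y}),H_\infty(X^{Eu,h,y}),X^{Eu,h,y})$ to $(H_0(Y),H_\infty(Y),Y)$; since $r=\infty$ is inaccessible for the CEV diffusion when $p<1$, the hitting times $H_\infty$ are identically $+\infty$ and play no role, and projecting onto the first and third coordinates (a continuous map) gives exactly the asserted convergence of $(H_0(X^{Eu,h,y}),X^{Eu,h,y})$ to $(H_0(Y),Y)$. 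So the real content is the pair of Lemmas~\ref{lem:cev_05} and~\ref{lem:cev_neq_05} announced in the text, which together check~\eqref{eq:01042019a1} and~\eqref{eq:04042019b1}.

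The key computational object is the quantity
\begin{equation*}
\Phi_h(y)=\frac{1}{2}\int_{(y-a^{Eu}_h(y),\,y+a^{Eu}_h(y))}\bigl(a^{Eu}_h(y)-|u-y|\bigr)\,\frac{2}{u^{2p}}\,du,
\end{equation*}
and the strategy is to estimate $\Phi_h(y)$ for $y$ ranging over $I_h$. First I would determine $I_h$ explicitly for these scale factors: since $a^{Eu}_h(y)=\sqrt h\,y^p$ keeps $y\pm a^{Eu}_h(y)$ inside $(0,\infty)$ precisely when $y>h^{1/(2(1-p))}$, while in the capped regime $y-a^{Eu}_h(y)=0$, the set $I_h$ is (up to the $(l_h,r_h)$ part) the region $y>h^{1/(2(1-p))}$. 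On this region substitute $u=y+\sqrt h\,y^p\,v$ with $v\in(-1,1)$ to get
\begin{equation*}
\Phi_h(y)=h\,y^{2p}\int_{-1}^{1}(1-|v|)\bigl(y+\sqrt h\,y^p v\bigr)^{-2p}\,dv
= h\int_{-1}^{1}(1-|v|)\Bigl(1+\sqrt h\,y^{p-1}v\Bigr)^{-2p}\,dv .
\end{equation*}
For a diffusion term that is genuinely locally constant one would get exactly $h$; the task is to control the multiplicative correction factor $C_h(y)=\int_{-1}^{1}(1-|v|)(1+\sqrt h\,y^{p-1}v)^{-2p}\,dv$ as $y$ varies. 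The crucial small parameter is $\theta:=\sqrt h\,y^{p-1}$, which on $I_h$ satisfies $0<\theta<1$ because $y>h^{1/(2(1-p))}$ forces $\sqrt h\,y^{p-1}<1$. For $p=\frac12$ the integrand simplifies to a rational expression that can be integrated in closed form (this is why the case is split off as Lemma~\ref{lem:cev_05}), giving an explicit $C_h(y)$ from which the two-sided bound $\alpha(h)\le\Phi_h(y)\le Bh^\gamma$ follows with $\gamma=1$ and $\alpha(h)/h\to1$. For general $p$, Lemma~\ref{lem:cev_neq_05}, I would expand $(1+\theta v)^{-2p}$ and bound it uniformly: the point is that as $\theta\to0$ the factor $C_h(y)\to\int_{-1}^1(1-|v|)\,dv=1$ uniformly over the allowed range of $\theta$, which delivers the lower bound $\alpha(h)=h\inf_{y\in I_h}C_h(y)$ with $\alpha(h)/h\to1$, and similarly the upper bound~\eqref{eq:04042019b1} on compacts where $\theta\to0$ uniformly.

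The main obstacle will be the upper bound in~\eqref{eq:01042019a1}, namely $\Phi_h(y)\le Bh^\gamma$ \emph{uniformly over all of} $I_h$, including $y$ close to the left endpoint $h^{1/(2(1-p))}$ where $\theta$ is of order one rather than small. There the correction factor $C_h(y)$ is bounded but need not be close to $1$, and for $p<0$ the integrand $(1+\theta v)^{-2p}$ is largest near $v=-1$ where $1+\theta v$ is small, so one must check that the singularity of $u^{-2p}$ (which for $p>\frac12$ sits at $u\to0$) does not blow up the integral. This is exactly where the restriction $p\in(-\infty,1)\setminus(-\frac12,0)$ and the flexibility of allowing $\gamma\in(\tfrac12,1]$ rather than forcing $\gamma=1$ enter: the excluded range $(-\frac12,0)$ is the regime where the uniform upper bound with an admissible $\gamma$ fails for the Euler scale factor, and the text signals it will be handled separately. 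I would therefore organize Lemma~\ref{lem:cev_neq_05} around a careful case analysis on the sign of $p$ to locate where $1+\theta v$ is smallest, bound the singular integral by an explicit power of $\theta$ (hence of $h$ via $\theta<1$), and thereby extract the exponent $\gamma$; the uniform lower bound $\alpha(h)$ is comparatively easy since $C_h(y)\ge\int_0^1(1-v)(1+\theta v)^{-2p}\,dv$ stays bounded below by a quantity tending to $1$.
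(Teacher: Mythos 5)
Your overall architecture is the same as the paper's: reduce everything to verifying Condition~(B) and invoke Theorem~\ref{th:main_new}, compute the integral in closed form, and study the normalized correction factor (your $C_h(y)$ is exactly the paper's $\psi$ evaluated at $\theta=(h^{1/(2(1-p))}/y)^{1-p}$, and your handling of Condition~(B)(ii) on compacts and of the projection from the triple to the pair is fine). The problem is that your diagnosis of where the hypothesis $p\notin(-\frac12,0)$ enters is inverted, and the lower-bound argument you sketch does not close. The upper bound in \eqref{eq:01042019a1} is unproblematic for \emph{every} $p<1$: even at $\theta=1$ the integrand $(1-|v|)(1+\theta v)^{-2p}$ behaves like $(1+v)^{1-2p}$ near $v=-1$, which is integrable for all $p<1$ because the tent factor vanishes linearly there; hence $C_h(y)=\psi(\theta)$ is bounded on $(0,1]$ by $\max\{1,\psi(1)\}$ with $\psi(1)=\frac{1-2^{1-2p}}{(2p-1)(1-p)}$, and one gets $\gamma=1$ throughout --- no case analysis on the sign of $p$ and no exponent $\gamma<1$ is needed. (Also, for $p<0$ the factor $(1+\theta v)^{-2p}$ is largest near $v=+1$, not $v=-1$.)

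What actually requires $p\notin(-\frac12,0)$ is the \emph{lower} bound: Condition~(B)(i) demands $\Phi_h(y)\ge\alpha(h)$ with $\alpha(h)/h\to1$ uniformly over $I_h$, and $I_h$ contains points where $\theta$ is of order one. For $p\in(-\frac12,0)$ one has $\psi(1)<1$ (e.g.\ $\psi(1)\approx0.975$ at $p=-\frac14$) and, equivalently, $l_h>h^{1/(2(1-p))}$, so $\inf_{y\in I_h}\Phi_h(y)/h\le\psi(1)<1$ and no admissible $\alpha$ exists. For $p\in(-\infty,1)\setminus(-\frac12,0)$ the inequality $\frac{(2p-1)(1-p)}{1-2^{1-2p}}\le1$ gives $l_h\le h^{1/(2(1-p))}$, hence $I_h=(l_h,\infty)$, and one must additionally treat the capped subinterval $(l_h,h^{1/(2(1-p))}]$ of $I_h$, where $a^{Eu}_h(y)=y$ and $\Phi_h(y)=\psi(1)\,y^{2(1-p)}\in[h,\psi(1)h]$ by the very definition of $l_h$. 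Your proposal never addresses this capped part of $I_h$, and the bound you offer for the easy direction, $C_h(y)\ge\int_0^1(1-v)(1+\theta v)^{-2p}\,dv$, tends to $\frac12$ as $\theta\to0$, not to $1$, so it cannot yield $\alpha(h)/h\to1$. These points --- the monotonicity/boundedness of $\psi$, the sign of $\psi(1)-1$, and the comparison of $l_h$ with $h^{1/(2(1-p))}$ --- are precisely the content of Lemmas~\ref{lem:cev_05} and~\ref{lem:cev_neq_05} and need to be supplied for the proof to be complete.
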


In particular, we have the weak convergence
$H_0(X^{Eu,h,y})\xrightarrow[]{w}H_0(Y)$
of the exit times.

\begin{lemma}\label{lem:cev_05}
In the case $p=\frac{1}{2}$
the scale factors $(a_h^{Eu})_{h\in (0,1)}$ satisfy Condition~(B).
\end{lemma}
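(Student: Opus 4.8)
The plan is to verify Condition~(B) directly for the case $p=\tfrac12$, where the speed measure is $m(dz)=\tfrac{2}{z}\,dz$ on $I^\circ=(0,\infty)$ and the Euler scale factor is $a_h^{Eu}(y)=\min\{\sqrt h\,\sqrt y,\,y\}$. First I would determine the auxiliary set $I_h$ for these scale factors. Since $a_h^{Eu}(y)=y$ precisely on $[0,h]$ (here $h^{1/(2(1-p))}=h^{1/(1-2\cdot\frac12)}$ must be computed; for $p=\tfrac12$ we get $h^{1/(2\cdot\frac12)}=h$), on that small interval one has $y-a_h^{Eu}(y)=0\notin I^\circ$, so those points are excluded from the second set in~\eqref{eq:22022019a2}; for $y>h$ we have $a_h^{Eu}(y)=\sqrt h\sqrt y<y$, so $y\pm a_h^{Eu}(y)\in I^\circ$ and such $y$ lie in $I_h$. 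Thus I would identify $I_h$ as essentially $(h,\infty)$ together with $(l_h,r_h)=(l_h,\infty)$, and reduce the task to estimating the central integral
\[
\Phi_h(y):=\frac12\int_{(y-a_h^{Eu}(y),\,y+a_h^{Eu}(y))}\bigl(a_h^{Eu}(y)-|u-y|\bigr)\,\frac{2\,du}{u}
\]
uniformly over $y\in I_h$.

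The core computation is to evaluate $\Phi_h(y)$ with the explicit density $2/u$. With $a=\sqrt h\sqrt y$ and the substitution $u=y+ s$, $s\in(-a,a)$, the integrand becomes $(a-|s|)/(y+s)$, so
\[
\Phi_h(y)=\int_{-a}^{a}\frac{a-|s|}{y+s}\,ds.
\]
I would compute this integral in closed form (it yields a combination of logarithms of $1\pm a/y$) and then expand in the small parameter $\theta:=a/y=\sqrt{h/y}$. The point is that for $y>h$ one has $\theta<\sqrt h/\sqrt h=1$ but more usefully $\theta^2=h/y$, and a Taylor expansion of the logarithmic expression should give $\Phi_h(y)=h\bigl(1+O(\theta^2)\bigr)=h\bigl(1+O(h/y)\bigr)$. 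Because the leading term is exactly $h$ (the Euler scale factor is calibrated so that the second moment matches the local time weighting to first order), the upper bound $\Phi_h(y)\le Bh^\gamma$ in part~(i) of Condition~(B) with $\gamma=1$ would follow once I control the correction term, and the lower bound $\Phi_h(y)\ge\alpha(h)$ with $\alpha(h)/h\to1$ would follow by taking $\alpha(h)=h\bigl(1-c\sup_{y\in I_h}(h/y)\bigr)$. The delicate point is that as $y\downarrow h$ the ratio $h/y\to1$, so the expansion is only uniformly small if I restrict to the part of $I_h$ where $y$ is bounded away from the regime $y\sim h$; I would handle the boundary strip $y\in(h,Ch]$ separately, showing there directly that $\Phi_h(y)$ stays within $[\alpha(h),Bh^\gamma]$ by monotonicity of $\Phi_h$ in $y$ and an explicit estimate at the endpoint.

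For part~(ii) of Condition~(B), on a fixed compact $K\subset(0,\infty)$ one has $y$ bounded away from $0$, hence $h/y\to0$ uniformly on $K$, so the same expansion gives $\Phi_h(y)=h(1+o(1))$ uniformly and I can take $\beta_K(h)=h\bigl(1+\sup_{y\in K}|\Phi_h(y)/h-1|\bigr)$, which satisfies $\beta_K(h)/h\to1$. The main obstacle I anticipate is the uniformity in part~(i) near the left edge of $I_h$: away from $y\sim h$ the logarithmic expansion is routine, but securing the two-sided bound with the correct powers $\gamma=1$ and $\alpha(h)/h\to1$ all the way down to $y$ of order $h$ requires either a careful monotonicity argument for $\Phi_h$ or a direct evaluation on the transition strip. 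Once the explicit integral is in hand and its expansion is controlled, the verification reduces to elementary but careful asymptotics, after which Theorem~\ref{th:main_new} applies to yield Corollary~\ref{cor:main_cev}.
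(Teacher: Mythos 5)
Your overall strategy --- evaluating the integral in closed form against the density $2/u$ and then analyzing the resulting logarithmic expression in the single variable $a/y$ --- is exactly the paper's route: the paper writes $\varphi(h,y)=\psi(h/y)\,h$ for $y\ge h$, where $\psi(x)=x^{-1}\bigl[(1+\sqrt x)\log(1+\sqrt x)+(1-\sqrt x)\log(1-\sqrt x)\bigr]$ is a continuous, strictly increasing bijection of $(0,1]$ onto $(1,2\log 2]$. However, two steps in your plan do not work as stated. First, your proposed lower bound $\alpha(h)=h\bigl(1-c\sup_{y\in I_h}(h/y)\bigr)$ cannot satisfy $\alpha(h)/h\to1$: since $l_h=h/(2\log 2)$, the supremum of $h/y$ over $I_h=(l_h,\infty)$ equals the constant $2\log 2$, so this $\alpha$ is a fixed fraction of $h$ strictly less than $h$ in the limit. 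What you are missing is the \emph{sign} of the correction: $(1+u)\log(1+u)+(1-u)\log(1-u)=u^2+u^4/6+\cdots\ge u^2$, hence $\varphi(h,y)\ge h$ for \emph{every} $y\ge h$ and one may simply take $\alpha(h)=h$. A two-sided $O(h/y)$ estimate is useless near $y\sim h$, and your fallback via monotonicity on a strip $(h,Ch]$ only closes the gap once you also record that $\psi>1$ everywhere (equivalently, that $\psi$ is increasing with $\lim_{x\to0}\psi(x)=1$), which is the paper's actual argument.

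Second, you never compute $l_h$ and never check the portion $(l_h,h]$ of $I_h$, where $a_h^{Eu}(y)=y$ and your substitution $a=\sqrt h\,\sqrt y$ is invalid. This strip is nonempty and contained in $I_h$ because $(l_h,r_h)\subseteq I_h$ and $l_h=h/(2\log 2)<h$; there the integral equals $2\log(2)\,y\in(h,\,2\log(2)h]$, which does respect both bounds of \eqref{eq:01042019a1} with $\alpha(h)=h$, $B=2\log 2$, $\gamma=1$ --- but this must be verified, and the verification genuinely needs the explicit value of $l_h$ (were $l_h$ of smaller order than $h$, the lower bound would fail on that strip). Your treatment of part (ii) of Condition (B) on compact sets is correct and coincides with the paper's.
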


\begin{proof}
In the calculations of this proof we use the convention that $0\log(0)=0$.
First note that for all
$y\in (0,\infty)$ and $a\in (0,y]$ it holds that
\begin{equation}\label{eq:int_m_cev}
\begin{split}
&\frac{1}{2}\int_{(y-a,y+a)} (a-|u-y|)\,m(du)
=\int_{y-a}^{y+a}\frac{a-|u-y|}{u}\,du\\
&=(y+a)\log\left(1+\frac{a}{y}\right)+(y-a)\log\left(1-\frac{a}{y}\right)\\
&=y\left[\left(1+\frac{a}{y}\right)\log\left(1+\frac{a}{y}\right)+\left(1-\frac{a}{y}\right)\log\left(1-\frac{a}{y}\right) \right].
\end{split}
\end{equation}
It follows that for all $a\in (0,\infty)$ we have
$$
\frac12\int_{(0, 2a)} (a - |u-a|) m(du)=
2\log(2)a
$$
and hence $l_h=\frac{h}{2\log(2)}$ for all $h\in (0,1)$. As $r=\infty$, it holds $r_h=\infty$ for all $h\in (0,1)$. Since $y-a^{Eu}_h(y)>0$ if and only if $y>h$, and since $l_h<h$,
it follows that $I_h=(l_h,\infty)$. 

Next, let the function $\varphi \colon (0,1)\times [0,\infty) \to \R$ be given by 
\begin{equation}\label{eq:def_varphi}
\varphi(h,y)=\frac{1}{2}\int_{(y-a^{Eu}_h(y),y+a^{Eu}_h(y))} (a^{Eu}_h(y)-|u-y|)\,m(du).
\end{equation}
It follows from \eqref{eq:04042019a1} and~\eqref{eq:int_m_cev} that
\begin{equation}\label{eq:04042019a2}
\varphi(h,y)=\begin{cases}
\psi\left(\frac hy\right)h&\text{if }y\in[h,\infty),\;h\in(0,1),\\
2\log(2)y&\text{if }y\in[0,h),\;h\in(0,1),
\end{cases}
\end{equation}
where the function $\psi\colon (0,1]\to (1,2\log(2)]$
is a continuous and strictly increasing bijection given by the formula
$$
\psi(x)=\frac{\left(1+\sqrt{x}\right)\log\left(1+\sqrt{x}\right)
+\left(1-\sqrt{x}\right)\log\left(1-\sqrt{x}\right)}{x}, \quad x\in (0,1].
$$

To verify Item~(i) of Condition~(B), we fix $h\in(0,1)$,
recall that $I_h=(\frac h{2\log(2)},\infty)$
and conclude from~\eqref{eq:04042019a2}
and the fact that $\psi$ is $(1,2\log(2)]$-valued that
$$
h=\inf_{y\in I_h}\varphi(h,y)\le\sup_{y\in I_h}\varphi(h,y)=2\log(2)h,
$$
i.e., \eqref{eq:01042019a1} is satisfied with $\alpha(h)=h$, $B=2\log(2)$ and $\gamma=1$.

To verify Item~(ii) of Condition~(B), let $K$ be a compact subset of $(0,\infty)$. 
The fact
 that $\psi$ is continuous and increasing 
 ensures for all $h\in (0,\min K)$ that
 $$
 \sup_{y\in K}\varphi(h,y)=\sup_{y\in K}\psi\left(\frac{h}{y}\right)h
 =\psi\left(\frac{h}{\min K}\right)h
 $$
 and hence it follows from the fact that
$\lim_{x\to 0} \psi(x)=1$ that 
$$
\lim_{h\to 0}\frac{\sup_{y\in K}\varphi(h,y)}{h}=\lim_{h\to 0}\psi\left(\frac{h}{\min K}\right)=1.
$$
This yields~\eqref{eq:04042019b1} and concludes the proof.
\end{proof}
 
\begin{lemma}\label{lem:cev_neq_05}
In the case $p\in(-\infty,1)\setminus\left((-\frac12,0)\cup\{\frac12\}\right)$
the scale factors $(a_h^{Eu})_{h\in (0,1)}$ satisfy Condition~(B).
\end{lemma}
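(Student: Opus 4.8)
The plan is to mirror the structure of the proof of Lemma~\ref{lem:cev_05}, but now with the speed measure $m(dx)=\frac{2}{x^{2p}}\,dx$ for a general $p\in(-\infty,1)\setminus\left((-\frac12,0)\cup\{\frac12\}\right)$. First I would compute, for $y\in(0,\infty)$ and $a\in(0,y]$, the quantity
\begin{equation*}
\frac{1}{2}\int_{(y-a,y+a)}(a-|u-y|)\,m(du)
=\int_{y-a}^{y+a}\frac{a-|u-y|}{u^{2p}}\,du.
\end{equation*}
Since $2p\ne1$, this integral evaluates to an explicit antiderivative (a combination of $u^{2-2p}$ terms), in contrast to the logarithmic expression in~\eqref{eq:int_m_cev}. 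By homogeneity of $m$ under scaling --- namely $m(c\,dx)=c^{1-2p}m(dx)$ --- the integral scales like $y^{2-2p}$ times a function of the ratio $a/y$. Plugging in the Euler scale factor $a^{Eu}_h(y)=\min\{\sqrt{h}\,y^p,y\}$ from~\eqref{eq:04042019a1}, I expect to obtain, for $y\ge h^{1/(2(1-p))}$ (the regime where $a^{Eu}_h(y)=\sqrt{h}\,y^p$), an expression of the form $\varphi(h,y)=\psi_p\!\left(h\,y^{2p-2}\right)\,h$ for a suitable continuous function $\psi_p$ on $(0,1]$ with $\lim_{x\to0}\psi_p(x)=1$, entirely analogous to the $p=\frac12$ case. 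The key point is that the argument $h\,y^{2p-2}=(\sqrt{h}\,y^{p-1})^2$ stays in $(0,1]$ precisely on that regime.

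Next I would identify the set $I_h$ explicitly and verify Item~(i) of Condition~(B). As before, $r_h=\infty$, and $l_h$ is computed from the boundary integral $\frac12\int_{(0,2a)}(a-|u-a|)\,m(du)=c_p\,a^{2-2p}$ for an explicit constant $c_p>0$; setting this equal to $h$ gives $l_h=\left(h/c_p\right)^{1/(2-2p)}$, which is of order $h^{1/(2(1-p))}$. Comparing $l_h$ with the threshold $h^{1/(2(1-p))}$ where $a^{Eu}_h$ switches form, I would determine that $I_h=(l_h,\infty)$, and then the uniform two-sided bound~\eqref{eq:01042019a1} follows from the boundedness of $\psi_p$ away from $0$ and $\infty$ on $(0,1]$: the lower bound gives $\alpha(h)=\inf\psi_p\cdot h$ (which may require a correction near $l_h$, where $\varphi$ transitions to the boundary value and equals $h$ by the very definition of $l_h$), and the upper bound gives $Bh^\gamma$ with $\gamma=1$. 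Item~(ii) is then verified exactly as in Lemma~\ref{lem:cev_05}: on a compact $K\subset(0,\infty)$ and for $h$ small, $a^{Eu}_h(y)=\sqrt{h}\,y^p$ throughout $K$, so $\sup_{y\in K}\varphi(h,y)=\sup_{y\in K}\psi_p(h\,y^{2p-2})\,h$, and $\psi_p(h\,y^{2p-2})\to1$ uniformly on $K$ as $h\to0$.

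The main obstacle I anticipate is controlling the behavior of $\psi_p$ and of $\varphi(h,y)$ near the transition point $y=h^{1/(2(1-p))}$, especially tracking how the lower threshold $l_h$ relates to it, since for different sign regimes of $p$ the constant $c_p$ and the monotonicity of the relevant integrand differ. In particular I expect the argument to split according to whether $p<0$ (so $2p-2<-2$, and $m$ blows up at $0$) or $p\in[0,1)\setminus\{\frac12\}$, which is exactly why the degenerate interval $(-\frac12,0)$ is excluded: there the lower bound $\alpha(h)/h\to1$ in Item~(i) can fail because $\psi_p$ need not stay bounded below by a quantity with the right asymptotics, so the asymmetric strength of Condition~(B) (rather than the symmetric Condition~(D)) is essential. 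I would therefore carry out the asymptotic analysis of $\psi_p(x)$ as $x\to0$ and as $x\to1$ carefully, confirming $\psi_p\colon(0,1]\to(1,\psi_p(1)]$ remains a bounded, continuous, strictly monotone function with $\lim_{x\to0}\psi_p(x)=1$, which is precisely what both items of Condition~(B) require.
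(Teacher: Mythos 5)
Your proposal is correct and follows essentially the same route as the paper: compute the integral in closed form (it equals $\frac{y^{2(1-p)}}{2(2p-1)(1-p)}\bigl[2-(1+\frac{a}{y})^{2(1-p)}-(1-\frac{a}{y})^{2(1-p)}\bigr]$), obtain $l_h=(h/c_p)^{1/(2(1-p))}$ with $c_p=\frac{1-2^{1-2p}}{(2p-1)(1-p)}$, observe that $c_p\ge1$ exactly on the stated parameter range so that $l_h\le h^{1/(2(1-p))}$ and $I_h=(l_h,\infty)$, and then verify (i) with $\alpha(h)=h$, $\gamma=1$, $B=c_p$ and (ii) via $\psi_p\to1$ as in Lemma~\ref{lem:cev_05}. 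The only cosmetic differences are your reparametrization of the argument of $\psi_p$ and the degenerate endpoints $p\in\{-\frac12,0\}$, where $c_p=1$ and $\psi_p\equiv1$ rather than strictly increasing with range $(1,\psi_p(1)]$.
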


\begin{proof}
 The arguments go along the lines of the proof of Lemma~\ref{lem:cev_05}. 
 Therefore, we only list the quantitative changes. 
 First note that for all
$y\in (0,\infty)$ and $a\in (0,y]$ it holds that
\begin{equation}\label{eq:int_m_cev_neq_05}
\begin{split}
&\frac{1}{2}\int_{(y-a,y+a)} (a-|u-y|)\,m(du)
=\int_{y-a}^{y+a}\frac{a-|u-y|}{u^{2p}}\,du\\
&=\frac{1}{2(2p-1)(1-p)}\left[
2y^{2(1-p)}-(y+a)^{2(1-p)}-(y-a)^{2(1-p)}
\right]\\
&=\frac{y^{2(1-p)}}{2(2p-1)(1-p)}\left[
2-\left(1+\frac{a}{y}\right)^{2(1-p)}-\left(1-\frac{a}{y}\right)^{2(1-p)}
\right].
\end{split}
\end{equation}
It follows that for all $a\in (0,\infty)$ we have
$$
\frac12\int_{(0, 2a)} (a - |u-a|) m(du)=
\frac{
1-2^{1-2p}}{(2p-1)(1-p)}a^{2(1-p)}
$$
and hence for all $h\in (0,1)$
$$
l_h=\left(\frac{(2p-1)(1-p)}{1-2^{1-2p}}h \right)^{\frac{1}{2(1-p)}}.
$$
As $r=\infty$, it holds $r_h=\infty$ for all $h\in (0,1)$.
The preceding calculations hold true, in fact,
for all $p\in(-\infty,1)\setminus\{\frac12\}$.
Given that $p\in(-\infty,1)\setminus\{\frac12\}$,
we have the following equivalence:
\begin{equation}\label{eq:06042019a1}
\frac{(2p-1)(1-p)}{1-2^{1-2p}}\le1
\end{equation}
holds if and only if
$p\in(-\infty,1)\setminus\left((-\frac12,0)\cup\{\frac12\}\right)$
(notice that the equality in~\eqref{eq:06042019a1}
holds true if and only if $p\in\{-\frac12,0\}$).
Hence, under the assumption of Lemma~\ref{lem:cev_neq_05},
$l_h\le h^{\frac1{2(1-p)}}$ for all $h\in(0,1)$.
Since $y-a^{Eu}_h(y)>0$ if and only if $y>h^{\frac{1}{2(1-p)}}$, we obtain that $I_h=(l_h,\infty)$. 

We again define the function $\varphi \colon (0,1)\times [0,\infty) \to \R$
by~\eqref{eq:def_varphi}
and infer from \eqref{eq:04042019a1} and~\eqref{eq:int_m_cev_neq_05}
that
$$
\varphi(h,y)=\begin{cases}
\psi\left(\frac{h^{\frac1{2(1-p)}}}y\right)h&\text{if }y\in\left[h^{\frac1{2(1-p)}},\infty\right),\;h\in(0,1),\\
\frac{1-2^{1-2p}}{(2p-1)(1-p)}y^{2(1-p)}&\text{if }y\in\left[0,h^{\frac1{2(1-p)}}\right),\;h\in(0,1),
\end{cases}
$$
where,
for $p\in(-\infty,1)\setminus\left([-\frac12,0]\cup\{\frac12\}\right)$,
the function
$\psi\colon(0,1]\to\left(1,\frac{1-2^{1-2p}}{(2p-1)(1-p)}\right]$
is a continuous and strictly increasing bijection given by the formula
\begin{equation}\label{eq:06042019a2}
\psi(x)=\frac{2-(1+x^{1-p})^{2(1-p)}-(1-x^{1-p})^{2(1-p)}}{2(2p-1)(1-p)x^{2(1-p)}}, \quad x\in (0,1],
\end{equation}
while, for $p\in\{-\frac12,0\}$, the function
$\psi\colon(0,1]\to\{1\}$ is identically~$1$
(but~\eqref{eq:06042019a2} still applies).
Now Condition~(B) is verified in qualitatively the same way
as it was done in the proof of Lemma~\ref{lem:cev_05}.
\end{proof}

\paragraph{Alternative approach for $p\in(-\infty,\frac12)$.}
As Corollary~\ref{cor:main_cev}
covers the case $p\in(-\infty,1)\setminus(-\frac12,0)$,
what follows is of particular interest
for $p\in(-\frac12,0)$,
although it applies
more generally
for $p\in(-\infty,\frac12)$.
%which we assume below.
The idea is to extend the state space beyond $0$
to make $0$ an interior point and get weakly
converging approximations for $H_0(Y)$
using any scheme $X^h$ satisfying
$X^h\xrightarrow[]{w}Y$, $h\to0$,
on the extended space (recall Remark~\ref{rem:01042019a1}).
It is worth noting that,
on the contrary, in the case $p\in[\frac12,1)$
we cannot extend the state space beyond $0$,
as $m((0,\eps))=\infty$ for any $\eps>0$ in the latter case,
so that \eqref{eq:06072018a1} cannot be satisfied
on any state space, where zero is an interior point.
For completeness, we now describe the approach
more precisely.

Below we assume as announced that $p\in(-\infty,\frac12)$.
Consider a continuous strong Markov process
$(\wt\Omega,\wt\cF,(\wt\cF_t)_{t\ge0},
(\wt P_y)_{y\in\bbR},(\wt Y_t)_{t\ge0})$
with state space $\bbR$
which is a weak solution to the SDE
\begin{equation}\label{eq:06042019a3}
d\wt Y_t=\eta(\wt Y_t)\,d\wt W_t
\end{equation}
with
\begin{equation}\label{eq:06042019a4}
\eta(x)=|x|^p+1_{\{0\}}(x),\quad x\in\bbR.
\end{equation}
Notice that~\eqref{eq:06042019a3}
has a unique in law weak solution because
the function $\eta$ satisfies the Engelbert-Schmidt conditions
\eqref{eq:27092018a2} and~\eqref{eq:27092018a3} on $\R$.
Put differently, $\wt Y$ is a diffusion in natural scale
with state space $\bbR$ and speed measure
$\wt m(dx)=\frac2{|x|^{2p}}1_{\bbR\setminus\{0\}}(x)\,dx$.

Fix an arbitrary starting point $y\in(0,\infty)$ and notice that
$$
\Law_{P_y}
\big(Y_{t}; t\in [0,\infty)\big)
=\Law_{\wt P_y}
\big(\wt Y_{t\wedge H_0(\wt Y)}; t\in [0,\infty)\big).
$$
In particular, the laws of the exit times
$H_0(Y)$ and $H_0(\wt Y)$ coincide.
As the functional $H_{0}\colon C([0,\infty),\R)\to [0,\infty]$
is $\wt P_y$-a.s.\ continuous
($0$ is an interior point of the state space $\bbR$ for~$\wt Y$),
we get the convergence
$H_0(\wt X^{h,y})\xrightarrow[]{w}H_0(\wt Y)$,
$h\to0$,
and hence the required convergence
$$
\Law_P\big(H_0(\wt X^{h,y})\big)
\xrightarrow[]{w}
\Law_{P_y}\big(H_0(Y)\big),
\quad h\to0,
$$
with any scheme $\wt X^{h,y}$ starting in $y$ satisfying
\begin{equation}\label{eq:06042019a5}
\Law_P\big(\wt X^{h,y}_t; t\in[0,\infty)\big)
\xrightarrow[]{w}
\Law_{\wt P_y}\big(\wt Y_t; t\in[0,\infty)\big),
\quad h\to0.
\end{equation}
It remains only to discuss for which schemes
we have~\eqref{eq:06042019a5}.
A universal possibility is to use
the EMCEL scheme for the SDE~\eqref{eq:06042019a3},
which ensures~\eqref{eq:06042019a5}
(recall that the EMCEL scheme works for
\emph{every} general diffusion).
Concerning other schemes
for which~\eqref{eq:06042019a5}
might hold true,
the following list contains
what is currently known from the literature
(to the best of our knowledge):
\begin{enumerate}[(1)]
\item
\cite{gyongy98} proves the almost sure convergence
of the Euler scheme for a diffusion
under the local Lipschitz condition
on the diffusion coefficient inside the domain.
This result does not apply,
as $\eta$ is not locally Lipschitz near the interior point~$0$.

\item
To mention a related result for
not locally Lipschitz diffusion coefficients,
\cite{AKL:11} proves the weak convergence
of the type~\eqref{eq:06042019a5}
for the Euler scheme for the CEV
diffusion~\eqref{eq:01042019a2}
(to be precise, not for~\eqref{eq:06042019a3}).
But they treat only the case $p\in[\frac12,1)$,
while we are considering $p\in(-\infty,\frac12)$ here.
Moreover, even if this result was available
also for $p\in(-\infty,\frac12)$,
we could not infer the weak convergence of the exit times
because the path functional $H_0$
is essentially discontinuous under $P_y$,
although it is essentially continuous under~$\wt P_y$.
In other words, we would then need such a result
for~\eqref{eq:06042019a3}
(not for~\eqref{eq:01042019a2}).

\item
Theorem~2.2 in~\cite{yan} establishes the weak convergence
of the Euler scheme for certain discontinuous,
hence not locally Lipschitz, diffusion coefficients $\eta$,
provided uniqueness in law holds for the SDE
(which we know for~\eqref{eq:06042019a3})
and $\eta$ is locally bounded and has at most linear growth.
This gives~\eqref{eq:06042019a5}
for the Euler scheme $\wt X^{h,y}$
for $\wt Y$ of~\eqref{eq:06042019a3}
whenever $p\in[0,1]$
(for us, the case of interest is $p\in[0,\frac12)$).
Alternatively, the latter statement can be inferred
from the results of the recent article~\cite{taguchi2019euler},
where again $\eta$ has to be locally bounded
and of at most linear growth.
Notice that for $p\in(-\infty,0)$ the mentioned results are not applicable
(local boundedness near $0$ is violated).

\item
In the case $p\in(-\infty,0)$
the convergence of the Euler scheme
for~\eqref{eq:06042019a3} is, indeed,
a delicate issue:
see the discussion in the end of Example~5.4
in~\cite{aku-jmaa} for the proof
that in the case $p=-1$
the law of the Euler scheme for~\eqref{eq:06042019a3}
does not weakly converge to the law of~$\wt Y$.

\item
We summarize the previous discussion by concluding
that in the case $p\in[0,\frac12)$
both the Euler and the EMCEL schemes
ensure~\eqref{eq:06042019a5},
while in the case $p\in(-\infty,0)$
the EMCEL scheme is the only one
currently known from the literature
for which \eqref{eq:06042019a5} is proved.
\end{enumerate}

\subsection{Squared Bessel processes}
As approximating hitting times of Bessel processes
is of special interest
(see \cite{DH:13} and~\cite{DH:17}),
in the end we briefly recall the relation between
the CEV diffusion and the squared Bessel process
(stopped at the time when it hits zero),
which yields a way to approximate the time
when a Bessel process hits zero.

Let $\delta\in(-\infty,2)$.
We consider the process $Z$ driven by the SDE
\begin{equation}\label{eq:01042019a3}
dZ_t=\delta\,dt+2\sqrt{Z_t}\,dW_t
\end{equation}
inside $I^\circ=(0,\infty)$ and stopped as soon as it hits
zero. Zero is accessible if and only if $\delta\in(-\infty,2)$.
Notice that, for $\delta\in(0,2)$,
$Z$ is the squared Bessel process of dimension $\delta$
stopped as it reaches zero.
Without stopping the solution to~\eqref{eq:01042019a3}
would be instantaneously reflecting at zero
whenever $\delta\in(0,2)$.
Recall that stopping when reaching a boundary point
is without loss of generality for our purposes
($H_0(Z)$ does not change).
Such a diffusion $Z$ is not in natural scale.
We define the process
$$
Y_t=s(Z_t),\quad t\in[0,\infty),
$$
with
$$
s(z)=(2-\delta)^{\delta-2} z^{1-\frac\delta2},
\quad z\in[0,\infty)
$$
(this is a variant of the scale function of~$Z$),
and conclude via It\^o's formula that $Y$ is exactly the CEV diffusion of~\eqref{eq:01042019a2}
with $p=1-\frac1{2-\delta}$.
Formally,
It\^o's formula is applied on the stochastic interval
$[0,H_0(Z))$ because $s$ has infinite derivative at zero.
This is enough for us because both $Z_t$ and $Y_t$
hit zero as $t\nearrow H_0(Z)$,
hence $H_0(Z)=H_0(Y)$,
and we indeed get the dynamics~\eqref{eq:01042019a2}
for $Y$ inside $(0,\infty)$
and $Y$ is stopped upon reaching zero.
As $H_0(Z)=H_0(Y)$,
we can approximate $H_0(Z)$
by approximating $H_0(Y)$,
e.g., as discussed in the previous subsection.

\paragraph{Acknowledgement}
We thank Stefan Ankirchner for an interesting discussion that initiated this research
and for pointing out several related references.
%Thomas Kruse and Mikhail Urusov
We acknowledge the support from the
\emph{German Research Foundation}
through the project 415705084.

\bibliographystyle{abbrv}
\bibliography{literature}

\end{document}